\patchcmd{\subsection}{\bfseries}{\itshape}{}{}
\patchcmd{\subsection}{-.5em}{.5em}{}{}
\numberwithin{equation}{section}
\theoremstyle{plain}
\newtheorem{thm}{Theorem}[section]
\newtheorem{lem}[thm]{Lemma}
\newtheorem{cor}[thm]{Corollary}
\newtheorem{obs}[thm]{Observation}
\newtheorem{prop}[thm]{Proposition}
\newtheorem*{claim}{Claim}
\theoremstyle{definition}
\newtheorem{conj}[thm]{Conjecture}
\newtheorem{que}[thm]{Question}
\newtheorem{exa}[thm]{Example}
\theoremstyle{remark}
\newtheorem{rem}[thm]{Remark}
\newcommand{\A}{{\mathbf A}}
\newcommand{\B}{{\mathbf B}}
\newcommand{\C}{{\mathbf C}}
\newcommand{\F}{{\mathbf F}}
\newcommand{\G}{{\mathbf G}}
\renewcommand{\L}{\mathbf{L}}
\newcommand{\M}{{\mathbf{M}}}
\newcommand{\Nl}{{\mathbf{N}}}
\renewcommand{\S}{{\mathbf{S}}}
\newcommand{\T}{\mathbf{T}}
\newcommand{\N}{{\mathbb{N}}}
\newcommand{\Z}{{\mathbb{Z}}}
\newcommand{\fF}{{\mathcal F}}
\newcommand{\gG}{{\mathcal G}}
\newcommand{\iI}{\mathcal{I}}
\newcommand{\lL}{\mathcal{L}}
\newcommand{\vV}{{\mathcal{V}}}
\newcommand{\cC}{\mathcal{C}}
\newcommand{\pP}{\mathcal{P}}
\newcommand{\Cg}{\mathrm{Cg}}
\newcommand{\Clo}{\mathrm{Clo}}
\newcommand{\Con}{\mathrm{Con}}
\newcommand{\vtt}[2]{\begin{pmatrix} #1\\ #2\end{pmatrix}}
\newcommand{\algop}[2]{\langle {#1}, {#2} \rangle}
\newcommand{\setsuchthat}{\ : \ }
\newcommand{\meet}{\wedge}
\newcommand{\join}{\vee}
\newcommand{\comment}[1]{\textsf{[#1]}}
\date{\today}
\begin{document}


\title{Finiteness properties of direct products of algebraic structures}

\author{Peter Mayr} 
\address[Peter Mayr]{Department of Mathematics, CU Boulder, USA}
\email{peter.mayr@colorado.edu}
\author{Nik Ruskuc}
\address[Nik Ru\v{s}kuc]{School of Mathematics and Statistics, University of St Andrews, St Andrews, Scotland, UK}
\email{nik.ruskuc@st-andrews.ac.uk}

\thanks{The first author was supported by the Austrian Science Fund (FWF): P24285
 and the National Science Foundation under Grant No. DMS 1500254}
\keywords{finitely generated, finitely presented, residual finite}
\subjclass[2000]{Primary: 08B25; Secondary: 08A05, 08B05, 20N05, 06B25}

\begin{abstract}
We consider the preservation of properties of being finitely generated, being finitely presented and being residually finite under direct products in the context of different types of algebraic structures.
The structures considered include Mal'cev algebras (including groups, rings and other classical algebras, as well as loops), idempotent algebras (including lattices), semigroups, and algebras in congruence modular varieties.
We aim to identify as broad classes as possible in which the `expected' preservation results ($\A\times\B$ satisfies property $\pP$ if and only if $\A$ and $\B$ satisfy $\pP$) hold, and to exhibit ways in which they may fail outside those classes.
\end{abstract}

\maketitle

\section{Introduction: direct products in algebra}
\label{secIntro}

The direct product is arguably the most elementary and ubiquitous construction in algebra.
Because of its uncomplicated relationship with the constituent factors, the `natural' statements of the form
\begin{quotation}
\it
$\A\times\B$ satisfies property $\pP$ if and only if $\A$ and $\B$ both satisfy $\pP$
\end{quotation}
abound, and are often taken as read. For example, in the class of groups, the above statement is true for 
$\pP$ being any of the following: finitely generated, finitely presented, residually finite, having decidable word problem,
 locally finite, torsion, solvable, nilpotent, \dots 
Somewhat curiously, the statement does not hold for the property of being hopfian \cite{corner65}.
In fact, these statements hold in the wider class of monoids (excluding of course the group-specific properties  of being nilpotent or solvable). However, if one moves on to the class of semigroups, the landscape begins to change and become more interesting. Specifically, the direct product $\S\times\T$ of two infinite semigroups is:
\begin{itemize}
\item
finitely generated if and only if $\S$ and $\T$ are finitely generated and have no indecomposable elements \cite[Theorem 2.1]{Robertson98};
\item
finitely presented if and only if both $\S$ and $\T$ are finitely presented, have no indecomposable elements, and satisfy an additional condition called stability \cite[Theorem 3.5]{Robertson98};
\item
residually finite if and only if both $\S$ and $\T$ are residually finite, but for apparently non-trivial reasons \cite[Theorem 1]{Gray09}.
\end{itemize}
To add further interest, it remains an open question whether it is possible to algorithmically determine if the direct product of two finitely presented semigroups  (given by their finite presentations) is finitely presented.
By way of contrast, the question where one factor is known to be finite is decidable (while still non-trivial), see \cite[Theorem 1.2]{Araujo00}.

The purpose of this article is to raise this problematic to the level of general algebra, and initiate a comparative study across different classes of algebras. We will focus on three well known properties of being
\begin{itemize}
\item
finitely generated,
\item
finitely presented, and
\item
residually finite,
\end{itemize}
and ask under which conditions does the direct product of two algebras $\A$ and $\B$ from a certain class of algebras $\cC$ have one of these properties.
The classes $\cC$ we wish to cover are many, and we will resort to the language and methodology of universal algebra to enable us to state results for several classes at once. Here is the list of classes considered here:
\begin{itemize}
\item
groups;
\item
other `classical' structures: rings, (associative and non-associative) algebras, modules;
\item
group-like, non-associative algebras: loops and quasigroups;
\item
Mal'cev algebras (encompassing all of the above);
\item
semigroups and monoids;
\item
lattices;
\item
idempotent algebras (including lattices);
\item
algebras in congruence modular varieties (including Mal'cev algebras and lattices).
\end{itemize}

 The main general preservation results we prove concern finite generation in
 congruence permutable or idempotent varieties (Theorems~\ref{fg:Mal'cev} and~\ref{th:Gidempotent}), and residual
 finiteness in congruence modular varieties (Theorem~\ref{th:RCM}). Contrasting these are some rather
 surprising negative results, such as: 
\begin{itemize}
\item
 There exist expansions of groups (and expansions of lattices) $\A,\B$ that are finitely generated
 without $\A\times\B$ being finitely generated (Remarks~\ref{re:Zx} and~\ref{re:Ns}).
\item
 There exist algebras $\A,\B$ with $\B$ not finitely presented such that $\A\times\B$ is finitely presented
 (Example~\ref{ex:Gset}).
\item
 Direct products of two infinite loops (or two infinite lattices) are never finitely presented
 (Theorems~\ref{th:Ploop} and~\ref{th:Lfp}).
\end{itemize}

Due to the great number of different settings that are considered here, we will not attempt a systematic definition of all the objects, concepts and properties. Instead, we will implicitly rely on the reader's prior, if not explicit or systematic, familiarity with many of the elementary notions used, and will confine ourselves to giving an outline of the basic definitions and facts before each individual result necessary to understand its statement and proof. 
For a more systematic introduction to universal algebra we refer the reader to \cite{burris81,mckenzie87}, 
\cite{hungerford80} for classical algebraic structures,  \cite{howie95} for semigroups and monoids, \cite{bruck58} for loops and quasigroups, \cite{Gr:LTF} for lattices.

Throughout the paper we will denote algebras (i.e. sets endowed by certain operations) by boldface capital letters $\A$, $\B$, $\C$,\dots; the underlying sets of elements will be denoted by the corresponding plain letters $A$, $B$, $C$,\dots. 
 A \emph{type} of algebras is a set $\fF$ of function symbols with a non-negative integer (the \emph{arity})
 associated to each $f\in\fF$. An algebra $\A$ of type $\fF$ is a pair
 $\algop{A}{\{ f^\A \setsuchthat f\in\fF \}}$ where $f^\A$ is an operation on $A$ whose arity equals 
 the arity of the symbol $f$.

 We define \emph{terms} of type $\fF$ with variables $x_1,x_2,\dots$ and their length inductively:
 every variable $x_1,x_2,\dots$ is a term of length $1$. For any $k$-ary $f\in\fF$ and terms $s_1,\dots,s_k$ we have
 that $f(s_1,\dots,s_k)$ is a term of length $1+|s_1|+\dots+|s_k|$. For a term $t$ and an algebra $\A$ of type $\fF$
 let $t^\A$ denote its induced \emph{term operation} on $\A$.
 If there is no danger of confusion, we simply write $t$ instead of $t^\A$.

For two algebras $\A$, $\B$ of the same type $\fF$, their direct product $\C=\A\times\B$
has the carrier set $C=A\times B$ and componentwise operations
\[
f^{\A\times\B}((a_1,b_1),\dots,(a_k,b_k))=(f^\A(a_1,\dots,a_k),f^\B(b_1,\dots,b_k)),
\]
where $f\in\fF$ is a $k$-ary operation symbol, $a_1,\dots, a_k\in A$ and $b_1,\dots,b_k\in B$.
Occasionally we will resort to the vertical notation for pairs, writing $\vtt{a}{b}$ for $(a,b)$.
Associated with $\A\times \B$ are the \emph{natural projections} $\pi_A :A\times B\rightarrow A$, $(a,b)\mapsto a$ and $\pi_B :A\times B :\rightarrow B$, $(a,b)\mapsto b$, which are epimorphisms. In particular, $\A$ and $\B$ are homomorphic images of $\A\times\B$.
In addition, often, but not always, $\A$ and $\B$ naturally embed into $\A\times \B$; this is certainly the case with structures with a neutral element (such as groups, monoid, rings, etc.) and idempotent algebras.

 We denote \emph{varieties} (i.e., classes of algebras of the same type that are defined by identities)
 by calligraphic letters $\vV, \gG$,\dots For example, the variety of groups $\gG$ consists
 of all algebras of type $\fF = \{\cdot,^{-1},1\}$ with arities $2,1,0$ defined by the identities
\[ (x\cdot y)\cdot z = x\cdot (y\cdot z),\  x\cdot 1 = x,\ x\cdot x^{-1} = 1. \]

For an algebra $\A$ we denote the set of all congruences on $\A$ by $\Con(\A)$, the trivial congruence (equality)
 by $0_A$ and the total congruence by $1_A$.
The congruence on $\A$ generated by a set $R\subseteq A\times A$ is denoted by $\Cg_\A(R)$.

 A variety $\vV$ is \emph{congruence permutable} or \emph{Mal'cev} if for every algebra $\A$ in $\vV$ all
 congruences permute, i.e., 
\[ \forall\alpha,\beta\in\Con(\A)\colon \alpha\circ\beta = \beta\circ\alpha. \]
 Equivalently $\vV$ is congruence permutable if and only if it has a ternary term $m$ such that the identities
\[
m(x,y,y)= x,\ m(y,y,x) = x
\]
 hold in $\vV$. Mal'cev varieties contain in particular all varieties that have the operations of a group (multiplication, inversion, identity) or quasigroup (multiplication, left division, right division).

 A variety $\vV$ is \emph{congruence modular} if every algebra in $\vV$ has a modular congruence lattice.
 Congruence modular varieties contain Mal'cev varieties and all varieties with lattice operations. 

\emph{Idempotent algebras} are another general class that will play a prominent role throughout. 
We say that an algebra $\A$ of type $\fF$ is \emph{idempotent} if 
\[
f^\A(x,x,\dots,x)=x
\]
for all $f\in \fF$ and all $x\in A$. Note that this implies that the above equality in fact holds for all the terms over  $\fF$.
One distinguished example of idempotent algebras is provided by lattices, considered as algebras of type $(\meet,\join)$.

\section{Finite generation}
\label{secfg}

In this section we ask under which conditions is the direct product $\A\times \B$ of two algebras finitely generated. We immediately note that $\A$ and $\B$ are homomorphic images of $\A\times \B$, and so we have:

\begin{obs}
\label{obs1}
If $\A\times\B$ is finitely generated, then so are both $\A$ and $\B$.
\end{obs}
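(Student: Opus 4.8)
The plan is to prove the contrapositive-free direct implication by exploiting the fact, already noted in the excerpt, that the natural projections $\pi_A$ and $\pi_B$ are surjective homomorphisms from $\A\times\B$ onto $\A$ and $\B$ respectively. The key general principle I would invoke is that finite generation is preserved under homomorphic images: if $\C$ is generated by a finite set $X$, and $\varphi\colon\C\to\D$ is a surjective homomorphism, then $\varphi(X)$ is a finite generating set for $\D$.

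The main step is therefore to establish this auxiliary principle, which I expect to be entirely routine. Suppose $\A\times\B$ is generated by a finite set $X\subseteq A\times B$. Because $\pi_A$ is a homomorphism, the image $\pi_A(X)$ generates $\pi_A(\langle X\rangle)$; since $X$ generates all of $\A\times\B$ and $\pi_A$ is surjective, we get that $\pi_A(X)$ generates $\pi_A(\A\times\B)=\A$. Concretely, every element $a\in A$ equals $\pi_A(c)$ for some $c\in A\times B$, and writing $c=t^{\A\times\B}(x_1,\dots,x_n)$ as a term in elements $x_i\in X$, applying $\pi_A$ and using that it commutes with every operation yields $a=t^\A(\pi_A(x_1),\dots,\pi_A(x_n))$, a term in the finite set $\pi_A(X)$. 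Hence $\A$ is generated by the finite set $\pi_A(X)$, and symmetrically $\B$ is generated by $\pi_B(X)$.

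Since there is no genuine obstacle here, the only point deserving slight care is purely a matter of formulation: one must ensure that ``generated by $X$'' is interpreted via the term operations of the type $\fF$ (as set up in the excerpt), so that the commutation of $\pi_A$ with arbitrary terms is immediate from its commutation with the basic operations $f^\A$. I would state this explicitly, noting that homomorphisms preserve all term operations, and then conclude by symmetry. The statement is essentially an instance of the elementary fact that a homomorphic image of a finitely generated algebra is finitely generated, applied twice via the two projections.
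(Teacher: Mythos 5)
Your proposal is correct and matches the paper's argument exactly: the paper derives the observation from the remark that $\A$ and $\B$ are homomorphic images of $\A\times\B$ via the natural projections, which is precisely your route, with the standard fact about images of finitely generated algebras spelled out in detail. No discrepancies to report.
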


So the only question to consider is which conditions ensure that $\A$ and $\B$ finitely generated implies that $\A\times\B$ is finitely generated. The statement certainly holds for groups, and it actually turns out that this can be generalised to Mal'cev varieties.

\begin{thm}
\label{fg:Mal'cev}
Let $\vV$ be a Mal'cev variety of type $\fF$, and let $\A,\B\in\vV$.
Suppose that there exist $a_0\in A$ and $b_0\in B$ such that the sets
$\{  f^\A(a_0,\dots,a_0)\setsuchthat f\in\fF\}$ and
$\{  f^\B(b_0,\dots,b_0)\setsuchthat f\in\fF\}$ are finite.
Then $\A\times\B$ is finitely generated if and only if both $\A$ and $\B$ are finitely generated.
\end{thm}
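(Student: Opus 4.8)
The forward implication is immediate from Observation~\ref{obs1}, since $\A$ and $\B$ are homomorphic images of $\A\times\B$ under the projections $\pi_A,\pi_B$. So the plan is to prove the converse: assuming $\A=\langle X\rangle$ and $\B=\langle Y\rangle$ with $X=\{x_1,\dots,x_n\}$ and $Y=\{y_1,\dots,y_m\}$ finite, I would exhibit a finite generating set for $\A\times\B$. Write $C_A=\{f^\A(a_0,\dots,a_0)\setsuchthat f\in\fF\}$ and $C_B=\{f^\B(b_0,\dots,b_0)\setsuchthat f\in\fF\}$ for the two finite sets from the hypothesis, and let $m$ be a Mal'cev term for $\vV$. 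The guiding idea is that $m$ lets one \emph{decouple} the two coordinates: whenever two elements agree in one coordinate, $m(\cdot,\cdot,\cdot)$ can overwrite that coordinate while leaving the other free. Concretely, I will repeatedly use identities such as $m((a,c),(a_0,c),(a_0,b_0))=(a,b_0)$ and, at the very end, $(a,b)=m((a,b_0),(a_0,b_0),(a_0,b))$.

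The candidate generating set is
\[
G=\{(x_i,b_0)\setsuchthat 1\le i\le n\}\cup\{(a_0,y_j)\setsuchthat 1\le j\le m\}\cup\{(c,b_0)\setsuchthat c\in C_A\}\cup\{(a_0,c)\setsuchthat c\in C_B\}\cup\{(a_0,b_0)\},
\]
which is finite precisely because $X,Y,C_A,C_B$ are finite. Writing $\langle G\rangle$ for the subalgebra it generates, I would consider
\[
L_A=\{a\in A\setsuchthat (a,b_0)\in\langle G\rangle\},\qquad L_B=\{b\in B\setsuchthat (a_0,b)\in\langle G\rangle\}.
\]

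The heart of the argument is the claim that $L_A$ is a subalgebra of $\A$. Given $a_1,\dots,a_k\in L_A$ and a $k$-ary $f\in\fF$, applying $f$ componentwise to $(a_1,b_0),\dots,(a_k,b_0)\in\langle G\rangle$ yields $(f^\A(a_1,\dots,a_k),\,f^\B(b_0,\dots,b_0))$, whose second coordinate is some $c\in C_B$ rather than $b_0$. The point is then to correct this stray coordinate: since $(a_0,c)\in G$ and $(a_0,b_0)\in G$, the element
\[
m\bigl((f^\A(a_1,\dots,a_k),c),(a_0,c),(a_0,b_0)\bigr)=(f^\A(a_1,\dots,a_k),b_0)
\]
lies in $\langle G\rangle$, so $f^\A(a_1,\dots,a_k)\in L_A$. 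Because $L_A$ contains $X$ (and indeed $C_A$ and $a_0$), it follows that $L_A=A$; the symmetric computation, which is where the elements $\{(c,b_0)\setsuchthat c\in C_A\}$ get used, gives $L_B=B$. Hence $A\times\{b_0\}$ and $\{a_0\}\times B$ are contained in $\langle G\rangle$, and the closing identity $(a,b)=m((a,b_0),(a_0,b_0),(a_0,b))$ shows $\langle G\rangle=A\times B$.

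I expect the main obstacle to be exactly the coordinate-coupling just described: evaluating a term on the column generators $(x_i,b_0)$ does not return to the fibre $A\times\{b_0\}$ but drifts in the second coordinate, and since $\langle a_0\rangle_\A$ and $\langle b_0\rangle_\B$ may well be infinite (e.g.\ $a_0=b_0=1$ in $\Z$ as a group), one cannot hope to generate these one-generated subalgebras outright. The resolution, and the only place the hypothesis is used, is that one need only undo \emph{single-operation} drift: the subalgebra-closure argument for $L_A$ processes one $f\in\fF$ at a time, so the finitely many correction elements $\{(a_0,c)\setsuchthat c\in C_B\}$ (and their mirror images $\{(c,b_0)\setsuchthat c\in C_A\}$) suffice, and these form a finite set precisely because $C_A$ and $C_B$ are finite.
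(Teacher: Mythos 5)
Your proposal is correct and follows essentially the same route as the paper: your generating set $G$ coincides exactly with the paper's set $Z=(X'\times\{b_0\})\cup(\{a_0\}\times Y')\cup\{(a_0,b_0)\}$, and your key step (apply $f$ componentwise, then use $m$ with the pairs $(a_0,f^\B(b_0,\dots,b_0))$ and $(a_0,b_0)$ to restore the second coordinate) is precisely the paper's Mal'cev correction. The only cosmetic difference is that you phrase the argument as ``$L_A$ is a subalgebra containing $X$, hence equals $A$,'' where the paper runs the equivalent induction on term length.
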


\begin{proof}
This result was known to A. Geddes (unpublished).
Only the converse direction needs to be proved. Let
$\A,\B$ be respectively generated by finite sets $X,Y$, and
let
\[
 X^\prime=X\cup\{ f^\A(a_0,\dots,a_0)\setsuchthat f\in\fF \},\ 
Y^\prime=Y\cup\{ f^\B(b_0,\dots,b_0)\setsuchthat f\in\fF \},
\]
which are finite sets by the assumptions in the theorem.
 We claim that $\A\times\B$ is generated by
the finite set
\[
Z=(X^\prime\times \{b_0\})\cup (\{a_0\}\times Y^\prime)\cup \{(a_0,b_0)\}.
\]

 First we prove
\begin{equation} \label{eq:svs}
 A\times\{b_0\} \subseteq \langle Z\rangle.
\end{equation}
Let $a\in A$ be arbitrary. Since $\A$ is generated by $X$, supposing $X=\{x_1,\dots,x_m\}$,
we have an $m$-ary term $s$ over $\fF$
such that $s^{\A}(x_1,\dots,x_m) = a$.
 Now we can use induction on the length of $s$. 
 If $s$ is a variable, then $a\in X$ and $(a,b_0)\in Z$ by definition.
 So assume that $s = f(t_1,\dots,t_k)$ for some $k$-ary $f\in\fF$ and terms $t_1,\dots,t_k$.
 Let $a_i = t_i^\A(x_1,\dots,x_m)$ for $1\leq i\leq k$.
 By the induction hypothesis we have $(a_1,b_0),\dots,(a_k,b_0)\in\langle Z\rangle$.
 From this and the definition of $Z$ it follows that the pairs
\[
 \vtt{f^\A(a_1,\dots,a_k)}{f^\B(b_0,\dots,b_0)},
\vtt{a_0}{f^\B(b_0,\dots,b_0)} ,
 \vtt{a_0}{b_0} 
\]
all belong to $\langle Z\rangle$ (in fact, the second and third belong to $Z$).
 By applying the Mal'cev term $m$ to the three pairs above, we obtain
 $(f^\A(a_1,\dots,a_k),b_0)\in\langle Z\rangle$ which proves~\eqref{eq:svs}.

 It follows similarly that 
\[
 \{a_0\}\times B \subseteq\langle Z\rangle.
\]
 Now for an arbitrary $(a,b)\in A\times B$ we have that
\[
 \vtt{a}{b_0}  ,
 \vtt{a_0}{b_0}  ,
 \vtt{a_0}{b}  
\]
all belong to $\langle Z\rangle$.
 Applying the Mal'cev term $m$ once more yields $(a,b)\in\langle Z\rangle$.
\end{proof}

\begin{rem} \label{re:Zx}
The extra assumption that the sets  
$\{  f^\A(a_0,\dots,a_0)\setsuchthat f\in\fF\}$ and
$\{  f^\B(b_0,\dots,b_0)\setsuchthat f\in\fF\}$ 
be finite is essential.
Consider for example the algebra $\A$ 
of type $(+,-,c_a\ (a\in A))$, where $\algop{A}{+,-}$ is an abelian group that is not finitely generated, and $c_a$ is a constant symbol representing the element $a$ of $A$.
Then, clearly, $\A$ is finitely generated (by the empty set, say), and we claim that
the direct square $\A\times \A$ is not finitely generated.
Suppose that $\A\times \A$ is generated by a finite set $Z$.
This means that considered just as an abelian group (i.e. without the constants) it would be generated by the set $Z_1=Z\cup \{(a,a)\setsuchthat a\in A\}$.
It is straightforward to show that the subgroup generated by $Z_1$ to
$\{ (a,b)\in A\times A\setsuchthat a-b\in A^\prime\}$, where $A^\prime$ is the subgroup of $A$ generated by the set $\{ x-y\setsuchthat (x,y)\in Z\}$.
Since $Z$ is finite and $\algop{A}{+,-}$ is not finitely generated it follows that $A^\prime\neq A$, and hence $Z$ does not generate the entire $\A\times\A$, a contradiction.
\end{rem}

The extra assumption is sufficiently weak for Theorem \ref{fg:Mal'cev}
to hold in any Malcev varieties with finitely many basic operations, or those in which every member contains an idempotent. In particular:

\begin{cor}
Let $\vV$ be any of the following classes: groups, rings, 
modules over a ring, Lie algebras,
loops, or quasigroups, and let $\A,\B\in\vV$.
Then $\A\times\B$ is finitely generated if and only if both $\A$ and $\B$ are finitely generated.
\end{cor}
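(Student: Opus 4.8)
The plan is to derive each case from Theorem~\ref{fg:Mal'cev}. By Observation~\ref{obs1} only the converse direction is at issue, so for each of the six classes it suffices to check that it is a Mal'cev variety and that the finiteness hypothesis of the theorem can be arranged.

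For the Mal'cev condition I would appeal to the remark already made after the definition of congruence permutability: any variety carrying the operations of a group or of a quasigroup is Mal'cev. This immediately covers groups, loops and quasigroups. Rings, modules over a ring, and Lie algebras each carry an underlying abelian group $\algop{A}{+,-}$ with a zero, and so the term $m(x,y,z)=x-y+z$ witnesses congruence permutability for these as well.

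For the finiteness hypothesis I would split into the two regimes flagged after Theorem~\ref{fg:Mal'cev}. Groups (of type $\{\cdot,{}^{-1},1\}$), rings, loops and quasigroups all have only finitely many basic operation symbols, so $\{f^\A(a_0,\dots,a_0):f\in\fF\}$ is finite for every $a_0\in A$, and similarly for $\B$; the hypothesis is then automatic for any choice of $a_0,b_0$. The genuinely different cases are modules over a ring $R$ and Lie algebras over a field $K$, whose types contain one unary scalar-multiplication symbol for each element of $R$ (resp.\ $K$) and hence may be infinite. Here I would instead exploit idempotency of the zero: in any such algebra the additive identity satisfies $0+0=0$, $-0=0$, $r\cdot0=0$ for all scalars $r$, and $[0,0]=0$, so $0$ is an idempotent element. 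Choosing $a_0=0_A$ and $b_0=0_B$ collapses the two sets to the singletons $\{0_A\}$ and $\{0_B\}$, which are trivially finite.

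With both conditions established in every case, Theorem~\ref{fg:Mal'cev} yields the corollary. I expect no serious obstacle: the only point requiring attention is that for modules and Lie algebras over an infinite ring or field one should not attempt to bound the number of operation symbols, but rather observe that the zero is idempotent and apply the theorem through its second regime.
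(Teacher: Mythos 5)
Your proposal is correct and matches the paper's own (implicit) justification: the remark preceding the corollary notes precisely that Theorem~\ref{fg:Mal'cev} applies to Mal'cev varieties with finitely many basic operations or whose members contain an idempotent, and your case split (finite type for groups, rings, loops, quasigroups; the idempotent zero element $a_0=0_A$, $b_0=0_B$ for modules and Lie algebras over possibly infinite scalars) is exactly that argument. Nothing further is needed.
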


Another large class for which the if and only if result holds is provided by idempotent algebras.
In fact we will prove a slightly more general result.
Note that the idempotency condition can be interpreted as the requirement for the unary clone of the algebra to consist only of the identity mapping. Recall that the $k$-ary clone $\Clo_k(\A)$ is obtained by taking the set of all 
$k$-ary terms over the type of $\A$, and interpreting them as $k$-ary functions $A^k\rightarrow A$;
for details see \cite{mckenzie87}.

\begin{thm}
\label{th:Gidempotent}
 Let $\A,\B$ be algebras of the same type.
 Assume that all functions in $\Clo_1(\A)$ and $\Clo_1(\B)$ are surjective, and that,
 moreover, one of $\Clo_1(\A)$ or $\Clo_1(\B)$ is finite (and hence a group of bijections).
Then $\A\times\B$ is finitely generated if and only if both $\A$ and $\B$ are finitely generated.
 In particular, the direct product of two idempotent algebras is finitely generated if and only if both algebras are finitely generated.
\end{thm}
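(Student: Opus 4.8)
The forward implication is immediate from Observation~\ref{obs1}, so the task is the converse: assuming $\A$ and $\B$ are finitely generated, I must produce a finite generating set for $\A\times\B$. First I would record the parenthetical remark that a finite set of surjective unary term operations is automatically a group of bijections: if $g\in\Clo_1(\B)$ is surjective then, $\Clo_1(\B)$ being finite, some power $g^e$ is idempotent, and a surjective idempotent self-map is the identity; hence $g^e=\mathrm{id}$ and $g^{e-1}$ is a two-sided inverse of $g$ inside $\Clo_1(\B)$. By the symmetry of $\A\times\B$ I may therefore assume without loss of generality that $\Clo_1(\B)=G$ is a finite group of permutations of $B$, while every member of $\Clo_1(\A)$ is surjective. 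Fixing finite generating sets $X=\{x_1,\dots,x_m\}$ of $\A$ and $Y$ of $\B$, the plan is to show that the finite set
\[
 Z = X\times (G\cdot Y),\qquad G\cdot Y=\{\,g(y)\setsuchthat g\in G,\ y\in Y\,\},
\]
generates $\A\times\B$, in two steps: first fill up $A\times\{y\}$ for each generator $y\in Y$, and then propagate to all of $A\times B$.

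The first step is the heart of the argument, and the place where both hypotheses on $\Clo_1(\B)$ enter. Fix $y\in Y$ and an arbitrary $a\in A$, and choose an $m$-ary term $s$ with $s^\A(x_1,\dots,x_m)=a$. The diagonal map $g\colon b\mapsto s^\B(b,\dots,b)$ is a unary term operation of $\B$, hence lies in $G$; the obstacle is that applying $s$ slot-wise to $(x_1,y),\dots,(x_m,y)$ returns $(a,g(y))$ rather than $(a,y)$, so the second coordinate is dragged along its $G$-orbit. The key observation is that this can be pre-compensated: since $g\in G$ is invertible, the element $g^{-1}(y)$ lies in $G\cdot Y$, so $(x_i,g^{-1}(y))\in Z$ for every $i$, and applying $s$ to these pairs gives
\[
 \bigl(s^\A(x_1,\dots,x_m),\ s^\B(g^{-1}(y),\dots,g^{-1}(y))\bigr)=\bigl(a,\ g(g^{-1}(y))\bigr)=(a,y).
\]
As $a$ was arbitrary, $A\times\{y\}\subseteq\langle Z\rangle$, and hence $A\times Y\subseteq\langle Z\rangle$. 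Finiteness of $G$ is what keeps $G\cdot Y$, and thus $Z$, finite, while the group structure is what supplies the corrective element $g^{-1}(y)$.

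For the second step I would use only the surjectivity of the unary clone of $\A$. Given an arbitrary $(a,b)\in A\times B$, write $b=t^\B(y_1,\dots,y_n)$ with $y_1,\dots,y_n\in Y$. The diagonal $c\mapsto t^\A(c,\dots,c)$ lies in $\Clo_1(\A)$ and is therefore surjective, so I may pick $c\in A$ with $t^\A(c,\dots,c)=a$. Since $A\times Y\subseteq\langle Z\rangle$, each pair $(c,y_k)$ belongs to $\langle Z\rangle$, and applying $t$ to $(c,y_1),\dots,(c,y_n)$ yields
\[
 \bigl(t^\A(c,\dots,c),\ t^\B(y_1,\dots,y_n)\bigr)=(a,b),
\]
so $(a,b)\in\langle Z\rangle$. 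Hence $\langle Z\rangle=A\times B$, which completes the converse. The final assertion of the statement then follows at once, since an idempotent algebra has $\Clo_1=\{\mathrm{id}\}$, a one-element group of bijections, so both hypotheses are satisfied by any two idempotent algebras.
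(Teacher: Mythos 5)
Your proof is correct and takes essentially the same approach as the paper's: close the generators of the ``group-side'' factor under the finite group of unary term operations, use the group inverse to pre-compensate the diagonal unary term, and use surjectivity of the other factor's unary clone to pick a preimage before the final term application. The only difference is cosmetic --- you put the finite group on the $\B$ side (enlarging $Y$ to $G\cdot Y$) where the paper puts it on the $\A$ side (closing $X$ under $\Clo_1(\A)$), so your argument is the mirror image of theirs.
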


\begin{proof}
Only the converse direction needs to be proved.
Let $X=\{x_1,\dots,x_m\}$ and $Y=\{y_1,\dots,y_n\}$ be finite generating sets for $\A$ and $\B$ respectively.
Without loss of generality assume that $\Clo_1(\A)$ is a finite group, and that $X$ is closed under its action.
We will prove that $Z=X\times Y$ is a generating set for $\A\times\B$.
Let $a\in A$, $b\in B$ be arbitrary.
Since $X$ generates $\A$, there exists an $m$-ary term $p$ such that
\[
a=p^\A (x_1,\dots,x_m).
\]
Denote by $p_1$ the unary term $p(x,\dots,x)$.
By assumption the unary function $p_1^\B (x)$ is surjective, and so there exists $b^\prime\in B$ such that
\[
p_1^\B(b^\prime)=b.
\]
Since $Y$ generates $\B$ there exists an $n$-ary term $q$ such that
\[
b^\prime=q^\B (y_1,\dots,y_n).
\]
Let $q_1$ be the unary term $q(x,\dots,x)$, and let
\[
x_i^\prime = (q_1^\A)^{-1}(x_i)\in X\ \ (i=1,\dots,m).
\]
Now, for each $i=1,\dots,m$ we have
\[
\vtt{x_i^\prime}{y_1},\dots,\vtt{x_i^\prime}{y_n}\in Z,
\]
from which it follows that 
\[
 q^{\A\times\B} ( \vtt{x_i^\prime}{y_1},\dots,\vtt{x_i^\prime}{y_n} )
=\vtt{q_1^\A(x_i^\prime)}{q^\B(y_1,\dots,y_n)}=\vtt{x_i}{b^\prime}
\]
 is in $\langle Z\rangle$.
Now applying $p$ to
\[
\vtt{x_1}{b^\prime},\dots,\vtt{x_m}{b^\prime}\in\langle Z\rangle
\]
we obtain
\[ 
p^{\A\times\B}(\vtt{x_1}{b^\prime},\dots,\vtt{x_m}{b^\prime})
=\vtt{p^\A(x_1,\dots,x_m)}{p_1^\B(b^\prime)}=\vtt{a}{b}
\in \langle Z\rangle 
\]
 as required.
\end{proof}

\begin{cor}
\label{corlatfg}
The direct product $\A\times \B$ of two lattices is finitely generated if and only if $\A$ and $\B$ are finitely generated.
\end{cor}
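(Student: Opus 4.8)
The plan is to recognize this statement as an immediate specialization of Theorem~\ref{th:Gidempotent}. First I would observe that lattices are algebras of type $(\meet,\join)$ whose two basic operations both satisfy the idempotency laws $x\meet x=x$ and $x\join x=x$. Hence every lattice is an idempotent algebra in the sense defined in the introduction. By the remark following that definition, idempotency of the basic operations propagates to all terms, so every unary term operation on a lattice collapses to the identity map; in other words $\Clo_1(\A)=\Clo_1(\B)=\{\mathrm{id}\}$.

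With this identification made, I would check that the hypotheses of Theorem~\ref{th:Gidempotent} hold for $\A$ and $\B$: the identity map is certainly surjective, so all functions in $\Clo_1(\A)$ and $\Clo_1(\B)$ are surjective, and the one-element clone $\{\mathrm{id}\}$ is a finite group of bijections. Applying the theorem then yields directly that $\A\times\B$ is finitely generated if and only if both $\A$ and $\B$ are finitely generated.

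Since all the substantive work has already been done in the proof of Theorem~\ref{th:Gidempotent}, there is no real obstacle here; the only thing to verify is the routine observation that lattices fall under the idempotent umbrella. Indeed, the closing sentence of Theorem~\ref{th:Gidempotent} already records the idempotent case explicitly, so this corollary amounts to nothing more than instantiating that clause to the variety of lattices.
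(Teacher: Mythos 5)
Your proposal is correct and is exactly the intended argument: the paper gives no separate proof for this corollary precisely because it is the instantiation, to lattices, of the final clause of Theorem~\ref{th:Gidempotent} (lattices are idempotent, so $\Clo_1(\A)=\Clo_1(\B)=\{\mathrm{id}\}$, which trivially satisfies the theorem's hypotheses). Your verification of those hypotheses matches the paper's reasoning, so there is nothing to add.
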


\begin{rem} \label{re:Ns}
The above corollary does not extend to arbitrary 
 expansions of lattices (algebras with near-unanimity term or
 algebras in congruence distributive varieties)
 in general. Indeed, 
the algebra $\algop{\N}{\max,\min, s}$, where
$s:x\mapsto x+1$ is the successor function, is clearly generated by $1$.
We claim that its square is not finitely generated.
Suppose otherwise, and let $Z$ be a finite generating set.
Noting that if $(x,y)=m((x_1,y_1),(x_2,y_2))$, where $m$ is either of $\max$ or $\min$,
we have $|x-y|\leq \max(|x_1-y_1|,|x_2-y_2|)$,
while if $(x,y)=s(x_1,y_1)$ then $|x-y|=|x_1-y_1|$,
it follows that $\langle Z\rangle$ is contained in
$\{ (x,y)\setsuchthat |x-y|\leq M\}$, where $M=\max\{ |z-u|\setsuchthat (z,u)\in Z\}$,
which is clearly not the entire $\N\times\N$, a contradiction.
This example also shows that Theorem \ref{fg:Mal'cev} does not generalize to congruence modular varieties.
However, Theorem \ref{th:Gidempotent} does apply to lattices with involutions.
\end{rem}

\begin{rem}
The assumption that at least one of the two clones is finite in Theorem \ref{th:Gidempotent} is necessary, even if both consist entirely of bijections:
for instance, $\Z\times\Z$ is not finitely generated when considered  as a $\Z$-set.
Likewise, finiteness of unary clones is not sufficient for finite generation:
 Consider the free semigroup $\S$ in the variety defined by $x^2=0$ on $3$ generators.
It is well known that $\S$ is infinite; see \cite[Chapter 2]{lothaire97}.
 Then $\S\times \S$ is not finitely generated by \cite[Theorem 2.1]{Robertson98}, even though $\Clo_1(\S)$ contains only two mappings, namely the identity and the constant $0$-mapping. 
\end{rem}

\section{Finite presentability}
\label{secfp}

To be \emph{finitely presented} means to be isomorphic to a quotient of a finitely generated free algebra by a finitely generated congruence.
Compared with finite generation and residual finiteness, finite presentability presents a subtle issue:
to be finitely presented is \emph{not} an intrinsic property of the algebra, but rather it also makes reference to the class within which the free algebras are taken (typically a variety). For instance, a finitely generated free group is (obviously) finitely presented as a group, is also finitely presented as a monoid, but is not finitely presented as a magma (groupoid, a set with an arbitrary binary operation). For a variety $\vV$ of type $\fF$ we will denote by $ F_\vV(X)$ the \emph{free algebra} over $X$ in $\vV$. It consists of all terms of type $\fF$ over variables $X$ modulo the identities that hold in $\vV$.

\subsection{From products to factors}

 Unlike for finite generation, the fact that a direct product is finitely presented
 does not always imply that the factors are finitely presented as demonstrated by the following example
 the main idea of which is due to Keith Kearnes and \'Agnes Szendrei.

\begin{exa} \label{ex:Gset}
 Let $\G$ be the free group on two generators $x,y$. 
Then $\G$ is a semidirect product of $A = \langle y\rangle$
 and a normal subgroup $B = \langle x^a \setsuchthat a\in A\rangle$.

 We let $\G$ act on itself by multiplication on the right to obtain a regular $\G$-set. Since as a group $\G$ is
 generated by $x$ and $y$, this regular $\G$-set is term-equivalent to the algebra $\C = \algop{G}{f_x,g_x,f_y,g_y}$ where
 for $z\in G$ we have
\[ f_x(z) = zx,\ g_x(z) = zx^{-1},\ f_y(z) = zy,\ g_y(z) = zy^{-1}. \]
 The cosets of the subgroups $A$ and $B$ of $\G$ yield congruences
\begin{align*}
 \alpha & = \{ (u,v) \in C^2 \setsuchthat Au= Av \}, \\
 \beta & = \{ (u,v) \in C^2 \setsuchthat Bu = Bv \}
\end{align*}
 on $\C$. 
 We claim:
\begin{enumerate}
\item \label{it:VC}
 The algebra $\C$ is free in the variety $\vV$ of unary algebras of type $\{f_x,g_x,f_y,g_y\}$ defined by the identities
\[
g_xf_x(z)=z,\ f_xg_x(z)=z,\ g_yf_y(z)=z,\ f_yg_y(z)=z.
\]
\item \label{it:CaCb}
 $\C$ is isomorphic to $\C/\alpha\times \C/\beta$.
\item \label{it:bnfg}
 $\C/\beta$ is not finitely presented. 
\end{enumerate}
 For~\eqref{it:VC}, clearly $\C$ belongs to $\vV$ and is one-generated.
From the defining identities of $\vV$, it follows that $ F_\vV(z)$ is isomorphic to $\C$.
For~\eqref{it:CaCb} we note that $\alpha\wedge\beta = 0_C$
and $\alpha\circ\beta = \beta\circ\alpha = 1_C$ because $A\cap B=\{1\}$ and
$AB=BA=G$.
Finally for~\eqref{it:bnfg} it suffices to show that $\beta$ is not finitely generated as a congruence of
 $\C$. Suppose otherwise that $\beta$ is generated by $(u_1,v_1),\dots,(u_k,v_k)$ for some $k\in\N$,
 $u_1,\dots,u_k,v_1,\dots,v_k\in G$. Then it is straightforward that $B$ is generated as a group by $u_1v_1^{-1},\dots,u_kv_k^{-1}$.
 But $B$ is  a normal subgroup of a free group of infinite index, and so is not finitely generated by \cite[Proposition 1.3.12]{lyndon01}. Hence $\beta$
 is not finitely generated, and the direct factor $\C/\beta$ is not finitely presented.
\end{exa}

To prove the implication from $\A\times\B$ being finitely presented to $\A$ and $\B$ being finitely presented it is sufficient to prove that the kernels of natural projections $\pi_A$, $\pi_B$ are finitely generated congruences on $\A\times\B$.
 This is the case for a wide range of varieties. For instance:

\begin{thm} \label{th:fgVfp}
 Let $\vV$ be a variety in which every direct product of two finitely generated algebras is finitely generated, and let $\A,\B\in\vV$.
 If $\A\times \B$ is finitely presented, then both $\A$ and $\B$ are finitely presented.
\end{thm}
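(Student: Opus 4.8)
The plan is to use the reduction already noted in the text: it suffices to show that the kernels $\ker\pi_A$ and $\ker\pi_B$ of the natural projections are finitely generated congruences on $\C:=\A\times\B$; by symmetry I treat only $\ker\pi_A$. First, since $\C$ is finitely presented it is in particular finitely generated, so by Observation~\ref{obs1} both $\A$ and $\B$ are finitely generated.

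Next I would identify $\ker\pi_A$ concretely. As a subset of $\C^2=\A\times\B\times\A\times\B$ it is $\{((a,b),(a,b'))\setsuchthat a\in A,\ b,b'\in B\}$, and the map $((a,b),(a,b'))\mapsto(a,b,b')$ is an isomorphism of $\ker\pi_A$, viewed as a subalgebra of $\C^2$, onto $\A\times\B\times\B$ (the operations act componentwise in $\C^2$, so restricting to the three relevant coordinates again gives componentwise operations). By the hypothesis on $\vV$, applied twice — first to $\A,\B$ and then to $\A\times\B$ and $\B$ — the algebra $\A\times\B\times\B$ is finitely generated. Hence $\ker\pi_A$ is finitely generated as a subalgebra of $\C^2$; fix a finite generating set $R$, so that $\langle R\rangle=\ker\pi_A$.

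The crucial step is to pass from finite generation as a subalgebra to finite generation as a congruence. A congruence on $\C$ is by definition a subalgebra of $\C^2$ that happens to be an equivalence relation, so $\Cg_\C(R)$ contains the subalgebra $\langle R\rangle$; conversely $\Cg_\C(R)\subseteq\ker\pi_A$ since $R\subseteq\ker\pi_A$ and $\ker\pi_A$ is a congruence. Because here $\langle R\rangle$ already equals $\ker\pi_A$ and is therefore itself a congruence, the chain $\langle R\rangle\subseteq\Cg_\C(R)\subseteq\ker\pi_A=\langle R\rangle$ forces $\Cg_\C(R)=\ker\pi_A$. Thus $\ker\pi_A$, and symmetrically $\ker\pi_B$, is finitely generated as a congruence. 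Writing $\C\cong F_\vV(X)/\theta$ with $X$ finite and $\theta$ finitely generated (possible since $\C$ is finitely presented), the congruence $\eta$ on $F_\vV(X)$ corresponding to $\ker\pi_A$ under the correspondence theorem is generated by $\theta$ together with finitely many preimages of the generators of $\ker\pi_A$, whence $\A\cong F_\vV(X)/\eta$ is finitely presented.

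I expect the main obstacle to be exactly the subalgebra-versus-congruence distinction in the crucial step: in general the congruence generated by a set is strictly larger than the subalgebra it generates, since one must still take the reflexive, symmetric and transitive closure, and the argument succeeds only because the subalgebra produced is \emph{already} an equivalence relation. Realising $\ker\pi_A$ as a genuine direct product $\A\times\B\times\B$, so that the variety hypothesis can be invoked to make that subalgebra finitely generated, is what makes this possible.
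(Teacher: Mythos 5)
Your proof is correct and takes essentially the same route as the paper's: both realise $\ker\pi_A$ as a subalgebra of $(\A\times\B)^2$ isomorphic to the direct product $\A\times\B^2$, invoke the hypothesis on $\vV$ (twice) to conclude it is finitely generated as a subalgebra, and then note that a congruence finitely generated as a subalgebra is finitely generated as a congruence, whence $\A$ is finitely presented. The only difference is that you spell out the subalgebra-versus-congruence step and the final correspondence argument, which the paper leaves implicit.
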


\begin{proof}
 Assume $\A\times\B$ is finitely presented.
 The kernel of the projection $\pi_A$ is $0_A\times 1_B$ which can be regarded as a subalgebra of $\A^2\times \B^2$.
 As such it is isomorphic to $\A\times\B^2$. Since $\A$ and $\B$ are finitely generated,
 by the assumption on $\vV$ also $\A\times\B^2$ will be finitely generated. Hence $0_A\times 1_B$ is finitely
 generated as a congruence. Thus $\A$ and similarly $\B$ are finitely presented.
\end{proof}




 In particular Theorem~\ref{th:fgVfp} applies to idempotent and Mal'cev varieties as we saw in Section~\ref{secfg}.
 We also recall that finite generation is not preserved by direct products in all congruence modular varieties. However we will prove
 the analog of Theorem~\ref{th:fgVfp} in these varieties as well.
 For that
 we need the following two auxiliary results.

\begin{lem} \label{le:rs}
 Let $\A$ and $\B$ be algebras in a variety $\vV$, and let $\rho\in\Con(\A\times\B)$.
\begin{enumerate}
\item\label{it:ts}
 The congruence $\rho\vee (0_A\times 1_B)$ is a product congruence, namely $\rho\vee (0_A\times 1_B) = \tau\times 1_B$ for
\[ \tau = \{ (u,v)\in A^2 \setsuchthat (u,b)\equiv_{\rho\vee (0_A\times 1_B)} (v,c) \text{ for all } b,c \in B \}. \] 
\item\label{it:rs}
 If $\vV$ is congruence modular, then $\rho\wedge (1_A\times 0_B) = \sigma\times 0_B$ for the congruence
\[ \sigma = \{ (u,v)\in A^2 \setsuchthat (u,b)\equiv_\rho (v,b) \text{ for some } b\in B \} \]
 on $\A$. 
\end{enumerate}
\end{lem}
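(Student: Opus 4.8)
The plan is to treat the two parts by different tools: part (1) rests on the correspondence theorem and needs no modularity, while part (2) is where congruence modularity is genuinely used, via the Shifting Lemma.

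For (1), write $\kappa=0_A\times 1_B$, which is precisely the kernel of the projection $\pi_A\colon\A\times\B\to\A$. Since $\rho\vee\kappa\supseteq\kappa$, I would invoke the correspondence theorem: congruences of $\A\times\B$ lying above $\kappa$ correspond bijectively to congruences of $\A\cong(\A\times\B)/\kappa$. First I would observe that any congruence $\theta\supseteq\kappa$ is automatically a product congruence $\theta'\times 1_B$: as $\kappa$ identifies any two elements with equal first coordinate, whether $(a_1,b_1)\equiv_\theta(a_2,b_2)$ holds does not depend on $b_1,b_2$, and the induced relation on $A$ is a congruence. Applying this to $\theta=\rho\vee\kappa$ yields $\rho\vee\kappa=\tau\times 1_B$, and unwinding the correspondence identifies the induced congruence on $\A$ with the set $\tau$ displayed in the statement; here the quantifier ``for all $b,c$'' may be replaced by ``for some $b,c$'' precisely because $\kappa\le\rho\vee\kappa$.

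For (2), set $\lambda=1_A\times 0_B$ (the kernel of $\pi_B$) and, for each $b\in B$, consider the fibre relation $\rho_b=\{(u,v)\in A^2 : (u,b)\equiv_\rho(v,b)\}$, so that as relations on $A$ one has $\sigma=\bigcup_{b\in B}\rho_b$. Each $\rho_b$ is an equivalence relation but need not be compatible with the operations, and a union of relations need not be a congruence; this is the crux, and the point at which modularity must enter. The key claim I would establish is that $\rho_b$ is independent of $b$, i.e.\ $(u,b')\equiv_\rho(v,b')$ implies $(u,b)\equiv_\rho(v,b)$ for every $b$. I would deduce this from the Shifting Lemma (valid in congruence modular varieties) applied to the quadrilateral with vertices $(u,b'),(v,b'),(u,b),(v,b)$: the edges with fixed first coordinate are $\kappa$-edges, the edges with fixed second coordinate are $\lambda$-edges, and since $\kappa\wedge\lambda=0_{A\times B}\le\rho$, the lemma transports the relation $\rho$ from the edge $((u,b'),(v,b'))$ to the parallel edge $((u,b),(v,b))$.

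Granting this independence, $\sigma=\rho_b$ for every $b$ and the proof closes quickly. Compatibility of $\sigma$ with a basic $k$-ary operation $f$ follows because applying $f$ componentwise to $\rho_b$-related tuples lands inside $\rho_{b'}$ with $b'=f^\B(b,\dots,b)$, and $\rho_{b'}=\sigma$; hence $\sigma$ is a congruence on $\A$ and $\sigma\times 0_B$ is a congruence on $\A\times\B$. Finally I would compute directly: $(a_1,b_1)\equiv_{\rho\wedge\lambda}(a_2,b_2)$ holds iff $b_1=b_2$ and $(a_1,b_1)\equiv_\rho(a_2,b_1)$, that is, iff $b_1=b_2$ and $a_1\equiv_\sigma a_2$, which is exactly $(a_1,b_1)\equiv_{\sigma\times 0_B}(a_2,b_2)$. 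The whole difficulty is thus isolated in the independence-of-$b$ claim, which is precisely the contribution of congruence modularity through the Shifting Lemma.
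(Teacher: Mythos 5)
Your proof is correct; part (2) takes a genuinely different route from the paper's, while part (1) is the paper's argument in different packaging: the paper checks by hand that the projection $\tau$ of $\rho\vee(0_A\times 1_B)$ onto $A$ is a congruence (transitivity being the only delicate point, using the ``for all $b,c$'' form), which is exactly the verification your appeal to the correspondence theorem encapsulates. For part (2) the paper stays entirely inside the congruence lattice: with $\mu=\rho\wedge(1_A\times 0_B)$, one application of the modular law gives $\mu=(1_A\times 0_B)\wedge\bigl[\mu\vee(0_A\times 1_B)\bigr]$, and then part (1) applied to $\mu$ (not to $\rho$) identifies $\mu\vee(0_A\times 1_B)$ as $\sigma\times 1_B$, so the product form $\mu=\sigma\times 0_B$ and the fact that $\sigma$ is a congruence drop out simultaneously. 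You instead argue element-wise via the Shifting Lemma, applied to the rectangle $(u,b'),(v,b'),(u,b),(v,b)$ whose sides lie in the projection kernels $\kappa=0_A\times 1_B$ and $\lambda=1_A\times 0_B$, with $\kappa\wedge\lambda=0_{A\times B}\leq\rho$; this is a valid application, since the Shifting Lemma holds in every algebra with a modular congruence lattice (it follows from essentially the same one-line modular-law computation). Your formulation foregrounds the fibre-independence claim $\rho_b=\rho_{b'}$ for all $b,b'\in B$, which is in fact logically equivalent to the stated equality $\rho\wedge(1_A\times 0_B)=\sigma\times 0_B$, but displays more transparently why ``for some $b$'' in the definition of $\sigma$ can be read as ``for all $b$''. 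The trade-off: the paper's proof is shorter and recycles part (1), needing nothing beyond the modular law as a lattice identity; yours isolates the geometric content of modularity --- a congruence on a product cannot act differently on different fibres --- and reduces the verification that $\sigma$ is a congruence to a routine computation, at the price of quoting the Shifting Lemma as an external tool. Since that lemma is itself an immediate consequence of the modular law, the two arguments are close cousins at bottom, but they are organized around different key steps.
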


\begin{proof}
 For \eqref{it:ts} we first prove that the equality 
\begin{equation} \label{eq:ts}
 \rho\vee (0_A\times 1_B) = \tau\times 1_B
\end{equation}
 holds. Let $u,v\in A, r,s\in B$. Assume $(u,r)\equiv_{\rho\vee (0_A\times 1_B)} (v,s)$.
 Then $(u,b)\equiv_{\rho\vee (0_A\times 1_B)} (v,c)$ for all $b,c\in B$. Hence $(u,v)\in\tau$ and 
 $((u,r),(v,s))\in\tau\times 1_B$. Conversely, if $((u,r),(v,s))\in\tau\times 1_B$, then
 $(u,r)\equiv_{\rho\vee (0_A\times 1_B)} (v,s)$ by the definition of $\tau$. Thus~\eqref{eq:ts} is proved.

 It remains to check that $\tau$ is a congruence.
 By~\eqref{eq:ts} we have that $\tau$ is the projection of $\rho\vee (0_A\times 1_B)$ onto $A$.
 The projection of any congruence is preserved by the operations of $\A$, is reflexive and symmetric.
 To see that $\tau$ is transitive, we use the particular form of $\rho\vee (0_A\times 1_B)$.
 Let $(u,v)\in\tau$ and $(v,w)\in\tau$. Then $(u,b)\equiv_{\rho\vee (0_A\times 1_B)} (v,c)$ for all
 $b,c\in B$ and $(v,d)\equiv_{\rho\vee (0_A\times 1_B)} (w,e)$ for all $d,e\in B$.
 So $(u,b)\equiv_{\rho\vee (0_A\times 1_B)} (w,e)$ for all $b,e\in B$ and $(u,w)\in\tau$.
 Thus $\tau\in\Con(\A)$ and item~\eqref{it:ts} is proved.

 For~\eqref{it:rs} assume that $\vV$ is congruence modular. Let $\mu = \rho\wedge (1_A\times 0_B)$. Then
\begin{align*}
 \mu & = \mu \vee [(1_A\times 0_B) \wedge (0_A\times 1_B)], \\
 & = (1_A\times 0_B) \wedge [ \mu \vee (0_A\times 1_B)] \text{ by the modular law.}
\end{align*}
 By~\eqref{it:ts} we have $\sigma\in\Con(\A)$ such that $\mu \vee (0_A\times 1_B) = \sigma\times 1_B$.
 It follows that
\[ \mu = \sigma\times 0_B. \]
 So $\sigma$ is the projection of $\mu$ onto $A$, i.e.,
\[ (u,v)\in\sigma \text{ iff } (u,b) \equiv_\mu (v,b) \text{ for some } b\in B. \]
 Since $\mu = \rho\wedge (1_A\times 0_B)$, the latter condition is equivalent to $(u,b)  \equiv_\rho (v,b)$
 for some $b\in B$. Hence $\sigma$ satisfies~\eqref{it:rs}.
\end{proof}

\begin{lem} \label{le:CMfg}
 Let $\vV$ be a congruence modular variety of type $\fF$, and let $\A,\B\in\vV$.
Suppose that there exist $a_0\in A$ and $b_0\in B$ such that the sets
$\{  f^\A(a_0,\dots,a_0)\setsuchthat f\in\fF\}$ and
$\{  f^\B(b_0,\dots,b_0)\setsuchthat f\in\fF\}$ are finite.
 If $\A$ is finitely generated, then the kernel of the projection of $A\times B$ onto $B$
 is a finitely generated congruence of $\A\times\B$.
\end{lem}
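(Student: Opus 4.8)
The plan is to exhibit an explicit finite set $R\subseteq (A\times B)^2$ whose generated congruence $\theta=\Cg_{\A\times\B}(R)$ equals the kernel $1_A\times 0_B$ of the projection onto $B$. Let $X=\{x_1,\dots,x_m\}$ be a finite generating set of $\A$, and put $S_A=\{f^\A(a_0,\dots,a_0)\setsuchthat f\in\fF\}$, which is finite by hypothesis. I would take
\[
R=\{((x_i,b_0),(a_0,b_0))\setsuchthat 1\le i\le m\}\cup\{((s,b_0),(a_0,b_0))\setsuchthat s\in S_A\},
\]
a finite set all of whose pairs have equal second coordinate. Since $1_A\times 0_B$ is a congruence containing $R$, we immediately get $\theta\subseteq 1_A\times 0_B$; the whole difficulty lies in the reverse inclusion.

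The key observation is that one cannot collapse the fibres $A\times\{b\}$ using $\theta$ alone: applying a basic operation $f$ to pairs of the form $(a_j,b_0)$ forces the second coordinate to become $f^\B(b_0,\dots,b_0)$, and $\theta$ offers no way of resetting it to $b_0$. I would therefore first argue modulo the larger congruence $\psi=\theta\vee(0_A\times 1_B)$, in which second coordinates may be changed freely, and only afterwards descend back to $\theta$. Concretely, I would prove by induction on the length of a term $t$ with $a=t^\A(x_1,\dots,x_m)$ that $((a,b_0),(a_0,b_0))\in\psi$ for every $a\in A$: the variable case is immediate from $R$, and in the inductive step $a=f^\A(a_1,\dots,a_k)$ one applies $f^{\A\times\B}$ to the pairs $((a_j,b_0),(a_0,b_0))\in\psi$, uses $0_A\times 1_B$ to move the resulting second coordinate $f^\B(b_0,\dots,b_0)$ back to $b_0$, and then uses the pair $((s,b_0),(a_0,b_0))\in R$ with $s=f^\A(a_0,\dots,a_0)\in S_A$. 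This is exactly where finiteness of $S_A$ is used, namely to keep $R$ finite. Once $A\times\{b_0\}$ lies in a single $\psi$-class, a further application of $0_A\times 1_B$ gives $\psi=1_A\times 1_B$, the total congruence. (Alternatively one can invoke Lemma~\ref{le:rs}\eqref{it:ts} to write $\psi=\tau\times 1_B$ and check directly that the one-generated quotient $\A/\tau$ is trivial, using $(x_i,a_0)\in\tau$ and $(s,a_0)\in\tau$ for $s\in S_A$.)

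Finally I would descend from $\psi$ back to $\theta$ using congruence modularity. Since $\theta\le 1_A\times 0_B$ and $(0_A\times 1_B)\wedge(1_A\times 0_B)=0_A\times 0_B$, the modular law yields
\[
\theta=\big(\theta\vee(0_A\times 1_B)\big)\wedge(1_A\times 0_B)=(1_A\times 1_B)\wedge(1_A\times 0_B)=1_A\times 0_B,
\]
using the previous step $\psi=1_A\times 1_B$. Hence $\ker\pi_B=1_A\times 0_B=\Cg_{\A\times\B}(R)$ is finitely generated, as required. The main obstacle is the middle step, establishing $\psi=1_A\times 1_B$, and in particular the realisation that one must pass to the join with $0_A\times 1_B$ in order to normalise second coordinates during the induction; modularity then recovers $\theta$ almost for free.
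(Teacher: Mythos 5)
Your proof is correct, and it rests on exactly the same ingredients as the paper's: the identical finite generating set $R$, a term-length induction driven by the pairs $((f^\A(a_0,\dots,a_0),b_0),(a_0,b_0))$, and an application of the modular law to the two projection kernels $1_A\times 0_B$ and $0_A\times 1_B$. The difference is organizational, but it is a real one. The paper first invokes its Lemma~\ref{le:rs}~\eqref{it:rs} to conclude that $\rho=\Cg_{\A\times\B}(R)$, being below $1_A\times 0_B$, is a product congruence $\sigma\times 0_B$ with $\sigma=\Cg_\A\bigl(\{(x,a_0)\setsuchthat x\in X\}\cup\{(f^\A(a_0,\dots,a_0),a_0)\setsuchthat f\in\fF\}\bigr)$, and then runs the term induction downstairs, inside $\A$, to get $\sigma=1_A$. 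You instead run the induction upstairs in $\A\times\B$ modulo the join $\psi=\theta\vee(0_A\times 1_B)$, using the join to renormalise second coordinates (which, as you correctly observe, is exactly the step that fails for $\theta$ alone), and only afterwards apply the modular law $\theta=\theta\vee\bigl[(0_A\times 1_B)\wedge(1_A\times 0_B)\bigr]=\bigl[\theta\vee(0_A\times 1_B)\bigr]\wedge(1_A\times 0_B)$ to descend. In effect you have inlined the proof of Lemma~\ref{le:rs}~\eqref{it:rs}, whose own proof is this very modular-law computation, into the one special case needed here. What the paper's factorisation buys is reusability and brevity: Lemma~\ref{le:rs} is also used in the proof of Theorem~\ref{th:RCM}, and quoting it lets the induction live in the simpler algebra $\A$. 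What your version buys is self-containedness and transparency: one sees precisely where congruence modularity enters (a single instance of the modular law at the end), and you never need to verify that $\rho$ is a product congruence or that its first-coordinate projection is a congruence of $\A$.
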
 

\begin{proof}
 Let $X = \{x_1,\dots,x_k\}$ be a finite generating set for $\A$, and define
\begin{multline*}
\rho = \Cg_{\A\times\B}\bigl( \bigl\{ ((x,b_0),(a_0,b_0)) \setsuchthat x\in X \bigr\} 
\\
\cup \bigl\{ ((f^\A(a_0,\dots,a_0),b_0),(a_0,b_0)) \setsuchthat f\in\fF \bigr\} \bigr). 
\end{multline*}
 Clearly $\rho\leq 1_A\times 0_B$. By Lemma~\ref{le:rs}~\eqref{it:rs} we have $\rho = \sigma \times 0_B$
 for $\sigma\in\Con(\A)$ with 
\begin{equation} \label{eq:scg}
 \sigma = \Cg_\A\bigl( \bigl\{ (x,a_0) \setsuchthat x\in X \bigr\} \cup \bigl\{ (f^\A(a_0,\dots,a_0),a_0) \setsuchthat f\in\fF \bigr\} \bigr). 
\end{equation}
 Since $\rho$ is finitely generated, the result will follow once we have shown that $\sigma = 1_A$.
 For that we claim
\begin{equation} \label{eq:usa}
 a \equiv_\sigma a_0 \text{ for all } a\in A.
\end{equation}
 Since $X$ generates $\A$, we have a $k$-ary term $t$ such that $t^\A(x_1,\dots,x_k) = a$.
 We prove~\eqref{eq:usa} by induction on the length of $t$. If $t$ is a variable, the statement
 is true by~\eqref{eq:scg}. So assume $t=f(s_1,\dots,s_\ell)$ for $f$ an $\ell$-ary operation
 in $\fF$ and $k$-ary terms $s_1,\dots,s_\ell$. We obtain
\begin{align*}
 a & = f^\A(s^\A_1(x_1,\dots,x_k),\dots,s^\A_\ell(x_1,\dots,x_k)) \span\omit\span\omit \\
 & \equiv_\sigma f^\A(a_0,\dots,a_0) &&\text{(by the induction assumption)} \\
 & \equiv_\sigma a_0  &&\text{(by~\eqref{eq:scg})}.
\end{align*}
 This proves~\eqref{eq:usa}. Thus $\sigma = 1_A$ and $\rho = 1_A\times 0_B$ is finitely generated.
\end{proof}

\begin{thm} \label{th:CMfp}
 Let $\vV$ be a congruence modular variety of type $\fF$, and let $\A,\B\in\vV$.
Suppose that there exist $a_0\in A$ and $b_0\in B$ such that the sets
$\{  f^\A(a_0,\dots,a_0)\setsuchthat f\in\fF\}$ and
$\{  f^\B(b_0,\dots,b_0)\setsuchthat f\in\fF\}$ are finite.
 If $\A\times\B$ is finitely presented, then both $\A$ and $\B$ are finitely presented.
\end{thm}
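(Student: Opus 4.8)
The plan is to reduce the statement to the finite generation of the two projection kernels and then to invoke Lemma~\ref{le:CMfg}. First, since $\A\times\B$ is finitely presented it is in particular finitely generated, so by Observation~\ref{obs1} both $\A$ and $\B$ are finitely generated. This is the only place where I use the full strength of finite presentability of the product directly; everything else will be about congruences.

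Next I would record the standard transfer fact: if $\C$ is a finitely presented algebra in $\vV$ and $\theta\in\Con(\C)$ is finitely generated, then $\C/\theta$ is finitely presented. Writing $\C\cong F_\vV(X)/\psi$ with $X$ finite and $\psi$ finitely generated, the preimage in $F_\vV(X)$ of $\theta$ under the quotient map is generated by the generators of $\psi$ together with finitely many chosen lifts of the generators of $\theta$; hence it is finitely generated, and $\C/\theta$ is finitely presented. Applying this with $\C=\A\times\B$, it suffices to show that the two projection kernels are finitely generated congruences on $\A\times\B$, since the natural projections give $\A\cong(\A\times\B)/(0_A\times 1_B)$ and $\B\cong(\A\times\B)/(1_A\times 0_B)$.

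Now Lemma~\ref{le:CMfg} delivers exactly what is needed. Because $\A$ is finitely generated and the finiteness hypothesis on $a_0,b_0$ holds, the kernel $1_A\times 0_B$ of the projection onto $B$ is finitely generated; by the transfer fact $\B$ is therefore finitely presented. Running the symmetric instance of Lemma~\ref{le:CMfg}, with the roles of $\A$ and $\B$ interchanged and using that $\B$ is finitely generated, yields that the kernel $0_A\times 1_B$ of the projection onto $A$ is finitely generated, so $\A$ is finitely presented. The symmetric instance is legitimate precisely because both congruence modularity and the hypothesis that $\{f^\A(a_0,\dots,a_0)\setsuchthat f\in\fF\}$ and $\{f^\B(b_0,\dots,b_0)\setsuchthat f\in\fF\}$ be finite are symmetric in $\A$ and $\B$.

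Since the genuine work has already been carried out in Lemmas~\ref{le:rs} and~\ref{le:CMfg}, I do not anticipate a substantial obstacle here; the proof is an assembly step. The only points demanding care are the bookkeeping in the transfer fact and the correct identification of each projection kernel with its product congruence, namely $0_A\times 1_B$ for $\pi_A$ and $1_A\times 0_B$ for $\pi_B$, so that the two applications of Lemma~\ref{le:CMfg} are matched to the right factor.
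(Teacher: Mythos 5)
Your proposal is correct and follows essentially the same route as the paper: finite presentability of $\A\times\B$ gives finite generation of $\A$ and $\B$ (Observation~\ref{obs1}), Lemma~\ref{le:CMfg} (applied in both symmetric instances) gives finite generation of the projection kernels $1_A\times 0_B$ and $0_A\times 1_B$, and the standard transfer fact then yields finite presentability of the quotients $\B$ and $\A$. The only difference is cosmetic: you spell out the lifting argument behind the transfer fact, which the paper leaves implicit in its remark preceding Theorem~\ref{th:fgVfp}.
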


\begin{proof}
 Assume  $\A\times\B$ is finitely presented in a congruence modular variety of finite type, i.e. it is a quotient of a finitely generated free algebra in $\vV$ by a finitely generated congruence. 
 In particular $\A$ and $\B$ are finitely generated, and hence, by Lemma~\ref{le:CMfg},
are quotients of $\A\times\B$ by finitely generated congruences. 
Therefore both $\A$ and $\B$ are finitely presented.
\end{proof}

Of course, as in Section \ref{secfg}, Theorem \ref{th:CMfp} in particular applies to congruence modular varieties of finite type, or those whose members contain idempotents.
We also remark that the analog for Theorems~\ref{th:fgVfp} and~\ref{th:CMfp}
 also holds for semigroups; see \cite{Robertson98}.

\subsection{From factors to products}

We first make a general observation.

\begin{prop}   \label{prop:Pfree}
 For any variety $\vV$ the following are equivalent:
\begin{enumerate}
\item \label{it:free}
 The direct product of any two finitely generated free algebras in $\vV$ is finitely presented.
\item \label{it:fp}
 The direct product of any two finitely presented algebras in $\vV$ is finitely presented.
\end{enumerate}
\end{prop}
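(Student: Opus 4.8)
The direction \eqref{it:fp}$\Rightarrow$\eqref{it:free} is immediate: a finitely generated free algebra $F_\vV(X)$ (with $X$ finite) is finitely presented, being $F_\vV(X)/\Cg_{F_\vV(X)}(\emptyset)$, so if \eqref{it:fp} holds it applies in particular to any two such free algebras. For the substantive converse \eqref{it:free}$\Rightarrow$\eqref{it:fp}, the plan is as follows. Let $\A,\B\in\vV$ be finitely presented, and write $\A=\F/\alpha$ and $\B=\G/\beta$, where $\F=F_\vV(X)$ and $\G=F_\vV(Y)$ are finitely generated free (so $X,Y$ are finite) and $\alpha=\Cg_\F(P)$, $\beta=\Cg_\G(S)$ with $P,S$ finite. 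By hypothesis \eqref{it:free} the algebra $\F\times\G$ is finitely presented, and I would realise $\A\times\B$ as one of its quotients: the epimorphism $\F\times\G\to\A\times\B$, $(f,g)\mapsto(f/\alpha,g/\beta)$, has kernel exactly the product congruence $\alpha\times\beta=\{((f,g),(f',g')):(f,f')\in\alpha,\ (g,g')\in\beta\}$, so $\A\times\B\cong(\F\times\G)/(\alpha\times\beta)$.

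The next step is the standard reduction that a quotient of a finitely presented algebra by a finitely generated congruence is again finitely presented (lift a finite generating set of the congruence together with a finite presentation of the algebra to a finite presentation of the quotient). Granting this, the entire statement collapses to the single claim that $\alpha\times\beta$ is a finitely generated congruence on $\F\times\G$. This is the heart of the matter, and I expect it to be the main obstacle. Indeed, by the same reduction together with its converse — for a surjection between two finitely presented algebras the kernel is a finitely generated congruence — the claim is equivalent to the conclusion being sought, so there is no hope of sidestepping it.

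To attack the claim I would use the factor-wise join decomposition
\[
\alpha\times\beta = (\alpha\times 0_G)\vee(0_F\times\beta),
\]
and exploit that a join of finitely generated congruences is finitely generated, thereby reducing to showing that each of $\alpha\times 0_G$ and $0_F\times\beta$ is finitely generated. The natural candidate generators are the finitely many defining relations of $\A$ placed on a single fibre, say $\{((p,g_0),(q,g_0)):(p,q)\in P\}$, and symmetrically those of $\B$. The hard part will be the transport step: propagating such a relation from the fibre $\F\times\{g_0\}$ to every other fibre $\F\times\{g\}$ while holding the second coordinate fixed. A single application of a basic operation already moves the second coordinate off $g$, so naive transport fails; my expectation is that the finitely many relations of a finite presentation of $\F\times\G$ — genuinely more than mere finite generation — are exactly the resource that must be fed in to accomplish this transport with finitely much data.

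Here Lemma~\ref{le:rs}~\eqref{it:ts} would serve as a useful organising tool, since it holds without any congruence-modularity assumption: for a finite candidate set $R$ it expresses $\Cg_{\F\times\G}(R)\vee(0_F\times 1_G)$ as a product congruence $\tau\times 1_G$, and one can pin down $\tau=\alpha$ and, symmetrically, the $\G$-side projection as $\beta$. Combining the two then sandwiches $\Cg_{\F\times\G}(R)$ below $(\alpha\times 1_G)\wedge(1_F\times\beta)=\alpha\times\beta$, giving one inclusion for free. The remaining, and genuinely delicate, point is the reverse inclusion $\alpha\times\beta\subseteq\Cg_{\F\times\G}(R)$, i.e. verifying that the chosen finite $R$ misses no fibre; this is where I anticipate the real work of the argument will lie.
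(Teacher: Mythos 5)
Your reductions are all correct and coincide with the paper's: the easy direction, the identification $\A\times\B\cong(\F\times\G)/(\alpha\times\beta)$, the standard fact that a quotient of a finitely presented algebra by a finitely generated congruence is finitely presented, and the decomposition $\alpha\times\beta=(\alpha\times 0_G)\vee(0_F\times\beta)$. But the proposal stops exactly where the proof has to begin. Showing that $\alpha\times 0_G$ is a finitely generated congruence \emph{is} the content of the proposition, and you explicitly defer it ("this is where I anticipate the real work of the argument will lie"). Moreover, your candidate generating set --- the relations of $\A$ planted on a single fibre $\F\times\{g_0\}$ --- comes with no mechanism for reaching other fibres, as you yourself observe, and your guess about what closes this gap is not the right one: the paper's transport step does \emph{not} feed in the defining relations of a finite presentation of $\F\times\G$ at all.

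The missing idea is this. First normalise $P$ so that it is symmetric and contains all pairs $(x,x)$, $x\in X$; then $\alpha=\Cg_\F(P)$ is simply the transitive closure of the \emph{subalgebra} $\langle P\rangle\leq\F^2$. Next, consider $\langle P\rangle\times 0_G=\{((u,t),(v,t))\setsuchthat (u,v)\in\langle P\rangle,\ t\in G\}$ as a subalgebra of $(\F\times\G)^2\cong\F^2\times\G^2$; it is isomorphic to $\langle P\rangle\times\G$, a direct product of two finitely generated algebras. Hypothesis \eqref{it:free} implies (because finitely presented algebras are finitely generated, and a product of quotients is a quotient of the product of free algebras) that products of finitely generated algebras in $\vV$ are finitely generated, so $\langle P\rangle\times 0_G$ has a finite generating set $R'$ \emph{as an algebra} --- its elements lie on all fibres, not on one. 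This $R'$ is the correct finite set: any pair in $\alpha\times 0_G$ is joined by a chain of pairs lying in $\langle P\rangle\times 0_G=\langle R'\rangle\subseteq\Cg_{\F\times\G}(R')$, so transitivity gives $\alpha\times 0_G\subseteq\Cg_{\F\times\G}(R')$, while the reverse inclusion is immediate since $R'\subseteq\alpha\times 0_G$ and the latter is a congruence. Note also that the finite presentability of $\F\times\G$ (as opposed to its finite generation) is used exactly once, at the very end, to conclude that the quotient of $\F\times\G$ by the now finitely generated congruence $\alpha\times\beta$ is finitely presented; it plays no role in the transport itself.
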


\begin{proof}
 The implication \eqref{it:fp}$\Rightarrow$\eqref{it:free} is immediate.

 For proving the converse, assume~\eqref{it:free}.
 Note that this implies
 in particular that the direct product of any two finitely generated algebras in $\vV$ is finitely generated.
 Let $\A,\B\in\vV$ be finitely presented. Then we have a finite set $X = \{x_1,\dots,x_m\}$ and a finite
 set $R$ of pairs of terms over $X$ such that $\A \cong F_\vV(X)/\Cg_{F_\vV(X)}(R)$.
 Without loss of generality, we may assume that $R = R^{-1}$ and
 $\{(x_i,x_i)\setsuchthat 1\leq i\leq m\} \subseteq R$.
 Similarly $\B \cong F_\vV(Y)/\Cg_{F_\vV(Y)}(S)$ for finite $Y$ and $S$.
 By assumption $F_\vV(X)\times F_\vV(Y)$ is finitely presented.
 So $\A\times\B$ being finitely presented is equivalent to
 $\Cg_{F_\vV(X)}(R)\times\Cg_{F_\vV(Y)}(S)$ being finitely generated as a congruence of $F_\vV(X)\times F_\vV(Y)$.

 We claim that
\begin{equation} \label{eq:Rx0}
 \Cg_{F_\vV(X)}(R)\times 0_{F_\vV(Y)} \text{ is finitely generated.}
\end{equation} 
 First observe that $\langle R\rangle\times 0_{F_\vV(Y)}\leq F_\vV(X)^2\times F_\vV(Y)^2$
 is finitely generated as direct product of finitely generated algebras. Let $R'$ be a finite generating set.

 Assume that $(u,v)\in\Cg_{F_\vV(X)}(R)$. By our assumptions $\langle R\rangle$ is reflexive and symmetric on $F_\vV(X)$.
 So $\Cg_{F_\vV(X)}(R)$ is the transitive closure of $\langle R\rangle$. We have $w_0,\dots,w_n\in F_\vV(X)$
 such that
\begin{equation*}
 u = w_0, v = w_n, (w_i,w_{i+1}) \in\langle R\rangle
\end{equation*}
 for all $i \in\{0,\dots,n-1\}$. 
It then follows that 
\[
(\vtt{w_i}{t},\vtt{w_{i+1}}{t})\in
\langle R\rangle\times 0_{F_\vV(Y)}
\]
for any $t\in F_\vV(Y)$, and hence, since $R'$ is a generating set for
$\langle R\rangle\times 0_{F_\vV(Y)}$, we have
\begin{equation*}
 (\vtt{w_i}{t},\vtt{w_{i+1}}{t}) \in\langle R'\rangle
\end{equation*}
for all $i \in\{0,\dots,n-1\}$.
 Hence $(\vtt{u}{t},\vtt{v}{t})$ is contained in the transitive closure of
 $\langle R'\rangle$, that is, in $\Cg_{F_\vV(X)\times F_\vV(Y)}(R')$. Thus $\Cg_{F_\vV(X)}(R)\times 0_{F_\vV(Y)}$ is generated
 by $R'$, and~\eqref{eq:Rx0} is proved. 
 
 Similarly $0_{F_\vV(X)}\times\Cg_{F_\vV(Y)}(S)$ is finitely generated. Since $\Cg_{F_\vV(X)}(R)\times\Cg_{F_\vV(Y)}(S)$
 is the join of $\Cg_{F_\vV(X)}(R)\times 0_{F_\vV(Y)}$ and $0_{F_\vV(X)}\times\Cg_{F_\vV(Y)}(S)$, the result follows.
\end{proof}

As an immediate consequence, we obtain the if and only if statement for all the `classical' algebraic structures:

\begin{cor} \label{cor:class}
Let $\vV$ be any of the following varieties: groups, rings, modules over a ring $R$, algebras over a field,
 Lie algebras, or monoids, and let $\A,\B\in\vV$. Then $\A\times\B$ is finitely presented if and only if $\A$ and $\B$ are finitely presented.
\end{cor}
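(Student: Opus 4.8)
The plan is to prove the two implications separately, in each case reducing to results already in hand. For the forward direction, that $\A\times\B$ finitely presented forces $\A$ and $\B$ to be finitely presented, I would apply Theorem~\ref{th:fgVfp}, whose only hypothesis is that direct products of finitely generated algebras in $\vV$ are again finitely generated. This holds for every variety on the list. For the Mal'cev varieties among them (groups, rings, modules, associative algebras over a field, Lie algebras) it follows from Theorem~\ref{fg:Mal'cev}: even though the type may be infinite (one scalar operation per ring- or field-element), the hypothesis of that theorem is met by taking $a_0=b_0=0$, since then $\{f^\A(0,\dots,0)\setsuchthat f\in\fF\}\subseteq\{0,1\}$ is finite. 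For monoids, which are not Mal'cev, finite generation of $\A\times\B$ is the standard observation that $(X\times\{1\})\cup(\{1\}\times Y)$ generates $\A\times\B$ whenever $X,Y$ generate $\A,\B$; so Theorem~\ref{th:fgVfp} applies here as well.

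For the converse I would invoke Proposition~\ref{prop:Pfree}, which reduces the entire question to a single assertion: that the direct product $F_\vV(X)\times F_\vV(Y)$ of two finitely generated free algebras in $\vV$ is finitely presented. This I would check case by case. For modules over $R$ it is immediate, since $F_\vV(X)\times F_\vV(Y)$ is again free, of finite rank $|X|+|Y|$, and a finitely generated free algebra is finitely presented (its defining congruence is $0$, generated by the empty set). For groups and monoids I would use the familiar presentation of a direct product as the coproduct modulo the relations that the two sets of generators commute: as the coproduct of two free objects is free on the disjoint union, $F_\vV(X)\times F_\vV(Y)$ is presented by $X\sqcup Y$ together with the finitely many relations $xy=yx$ (respectively $[x,y]=1$) for $x\in X$, $y\in Y$. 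For Lie algebras and for \emph{non-unital} rings and associative algebras the same idea works with annihilating rather than commuting relations: $F_\vV(X)\times F_\vV(Y)$ is the quotient of the free object on $X\sqcup Y$ by the ideal generated by the finitely many brackets $[x,y]$ (resp. products $xy$ and $yx$), $x\in X$, $y\in Y$; one checks that every ``mixed'' monomial then vanishes, leaving precisely the direct sum with componentwise multiplication, i.e. the direct product.

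The step I expect to be the main obstacle is the \emph{unital} ring and algebra case, where the coproduct identifies the two identity elements and so the plain commuting-generators presentation no longer produces the direct product. Here I would instead present $F_\vV(X)\times F_\vV(Y)$ on the generating set $X\sqcup Y\sqcup\{e\}$, where $e$ is an extra generator playing the role of the central idempotent $(1,0)$, imposing the finitely many relations that make $e$ a central idempotent ($e^2=e$ and $ez=ze$ for every generator $z$) together with $ex=x$ for $x\in X$ and $ey=0$ for $y\in Y$; note the cross-annihilation $xy=yx=0$ is then automatic, since $x=ex$, $y=(1-e)y$ and $e$ is central. The real work is verifying that this finite presentation defines exactly $F_\vV(X)\times F_\vV(Y)$: one uses the Peirce decomposition $Q=eQ\oplus(1-e)Q$ afforded by the central idempotent to identify $eQ$ (unital with identity $e$, generated by $X$, with no further relations) with the free factor $F_\vV(X)$, and likewise $(1-e)Q$ with $F_\vV(Y)$. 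This identification is the only genuinely non-formal point; once it is established, everything else in the corollary follows formally from Proposition~\ref{prop:Pfree} and Theorem~\ref{th:fgVfp}.
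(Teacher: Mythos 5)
Your proposal is correct, and its skeleton is the same as the paper's: the product-to-factors direction comes from Theorem~\ref{th:fgVfp}, whose hypothesis you verify just as the paper does (via Theorem~\ref{fg:Mal'cev} with $a_0=b_0=0$ for the Mal'cev cases, which correctly handles the infinite type of modules and algebras, and the standard generating-set argument for monoids), and the factors-to-product direction is reduced by Proposition~\ref{prop:Pfree} to exhibiting a finite presentation of $F_\vV(X)\times F_\vV(Y)$ variety by variety. The paper's proof is precisely your table: commuting relations $xy=yx$ for groups and monoids, no relations for modules, and annihilation relations $xy=yx=0$ (resp.\ $[x,y]=[y,x]=0$) for rings, algebras and Lie algebras. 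Where you genuinely add something is the unital case, and your instinct there is right: the annihilation presentation is only correct when ``rings'' and ``algebras'' are read as non-unital. If $1$ is in the type, then the images of $X\cup Y$ do not even generate $F_\vV(X)\times F_\vV(Y)$: the unital subring generated by the elements $(x,0)$ and $(0,y)$ consists of the pairs $(p,q)$ whose constant terms agree, which is a proper subring with no nontrivial central idempotents, hence not isomorphic to the product; equivalently, the quotient of $F_\vV(X\cup Y)$ by the annihilation relations is this fibre product over $\Z$ rather than the direct product. Your repaired presentation on $X\sqcup Y\sqcup\{e\}$, with $e$ a central idempotent acting as the identity on $X$ and annihilating $Y$, together with the Peirce decomposition $Q=eQ\oplus(1-e)Q$ to identify the two factors, is a sound fix. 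So your argument establishes the corollary under both readings, whereas the paper's table, as written, covers the non-unital one.
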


\begin{proof}
 Let $X,Y$ be disjoint finite sets. Then $F_\vV(X)\times F_\vV(Y)$ is isomorphic to $ F_\vV(X\cup Y)/\rho$ where
 the congruence $\rho$ is generated by the following set $R$.

\begin{center}
\begin{tabular}{|c|c|} \hline
 $\vV$ & $R$ \\ \hline
 groups, monoids & $\{ (xy,yx) \setsuchthat x\in X, y\in Y \}$ \\
 modules & $\emptyset$ \\
 rings, algebras & $\{ (xy,0), (yx,0) \setsuchthat x\in X, y\in Y \}$ \\
 Lie algebras & $\{ ([x,y],0), ([y,x],0) \setsuchthat x\in X, y\in Y \}$ \\ \hline
\end{tabular}
\end{center}
 Since $\rho$ is finitely generated in each case, $F_\vV(X)\times F_\vV(Y)$ is finitely presented. 
\end{proof}

 In the light of Theorem \ref{fg:Mal'cev} one may wonder whether Corollary~\ref{cor:class}
 generalizes to Mal'cev algebras.
This however is not the case, and it fails for loops in a very strong sense.
 Recall that the variety $\lL$ of loops has type 
$(\cdot,\backslash,/,1)$ (of arities $(2,2,2,0)$) and defining identities
\[
y\backslash (yx)=y(y\backslash x)=(xy)/y=(x/y)y=x1=1x=x;
\]
e.g. see \cite[Section II.1]{burris81}.

\begin{thm} \label{th:Ploop}
 In the variety of loops, no infinite direct product $\L\times \M$ with $\L$ and $\M$ non-trivial is
 finitely presented. 
\end{thm}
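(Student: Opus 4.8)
The plan is to assume $\C:=\L\times\M$ is finitely presented and to derive a contradiction by producing infinitely many ``essential'' relations. I would begin with the reductions. Since $\lL$ is a Mal'cev variety of finite type (loops carry quasigroup operations) each of whose members contains the idempotent $1$, Theorem~\ref{fg:Mal'cev} applied with $a_0=1$, $b_0=1$ (so that $\{f^\L(1,\dots,1)\setsuchthat f\in\fF\}=\{1\}$, and likewise for $\M$) shows that $\C$ finitely presented forces $\L$ and $\M$ to be finitely generated, say $\L=\langle\bar x_1,\dots,\bar x_k\rangle$ and $\M=\langle\bar y_1,\dots,\bar y_l\rangle$ with $\bar x_1\neq 1$, $\bar y_1\neq 1$. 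As $\{1\}$ is a subloop of each factor, the $(\bar x_i,1)$ generate $L\times\{1\}$ and the $(1,\bar y_j)$ generate $\{1\}\times M$, so $\C$ is generated by their union $Z$. Since finite presentability of a finitely generated algebra is independent of the chosen finite generating set, it then suffices to show that the kernel $\theta$ of the natural epimorphism $F_\lL(Z)\to\C$ is \emph{not} a finitely generated congruence, i.e. that $\Cg_{F}(R)\subsetneq\theta$ for every finite $R\subseteq\theta$, where $F=F_\lL(Z)$.

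The engine of the argument is that loops admit an abundance of central-type extensions, unconstrained by any associativity law. For any \emph{normalized} map $f\colon C\times C\to\Z$ (meaning $f(g,(1,1))=f((1,1),g)=0$), the set $C\times\Z$ with
\[
(g,s)\cdot(h,t)=(g\cdot h,\ s+t+f(g,h))
\]
is a loop $\K_f$: its translations are bijective because those of $\C$ are and $t\mapsto s+t+f(g,h)$ is a bijection of $\Z$. Sending $u_i\mapsto((\bar x_i,1),0)$ and $v_j\mapsto((1,\bar y_j),0)$ extends to a loop homomorphism $\phi\colon F\to\K_f$ whose composite with the projection $\K_f\to\C$ is the epimorphism defining $\theta$; hence $\ker\phi\subseteq\theta$. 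By induction on terms $\phi(w)=(\mathrm{val}_\C(w),\Sigma_f(w))$, where $\Sigma_f$ accumulates the value of $f$ on the two child-values at each internal node of the term tree of $w$. Consequently, for $(s,t)\in\theta$ the map $\phi$ identifies $s$ and $t$ \emph{if and only if} $\Sigma_f(s)=\Sigma_f(t)$, a single $\Z$-linear condition on the finitely many values of $f$ occurring as node-labels of $s$ and $t$.

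Next I would introduce the infinite family of valid relations. For a word $P$ over the letters $(\bar x_i,1)$, with $\mathrm{val}_\C(P)=(\gamma,1)$, set $u=(\bar x_1,1)$, $v=(1,\bar y_1)$ and consider
\[
r_P\colon\quad (P\cdot v)\cdot u \;=\; P\cdot(v\cdot u).
\]
Both sides evaluate in $\C$ to $(\gamma\bar x_1,\bar y_1)$, so $r_P\in\theta$: this is exactly the ``mixed associativity'' that holds in $\C$ because the nontrivial entries lie in complementary coordinates, yet fails in a free loop. The internal nodes of $P$ occur identically on both sides and cancel, so $\Sigma_f(\mathrm{LHS})-\Sigma_f(\mathrm{RHS})$ depends only on $f$ at the four node-labels
\[
((\gamma,\bar y_1),(\bar x_1,1)),\ ((\gamma,1),(1,\bar y_1)),\ ((\gamma,1),(\bar x_1,\bar y_1)),\ ((1,\bar y_1),(\bar x_1,1)).
\]
Now given a finite $R\subseteq\theta$, let $S$ be the finite set of first coordinates of $\C$-values occurring as node-labels in the terms of $R$. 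Because $\L$ is infinite its word-values exhaust $L$, so I may choose $P$ with $\gamma\notin S$ and $\gamma\neq 1$; then the first node-label above is ``fresh'', appears nowhere in $R$ and nowhere else in the two trees of $r_P$. Taking $f$ to be $0$ everywhere except value $1$ at that fresh node-label yields a normalized $f$ for which $\phi$ respects every relation in $R$ (their $\Sigma_f$-values are unchanged) but $\Sigma_f(\mathrm{LHS})-\Sigma_f(\mathrm{RHS})=1\neq 0$ for $r_P$. Hence $r_P\in\theta\setminus\Cg_{F}(R)$; as $R$ was arbitrary, $\theta$ is not finitely generated and $\C$ is not finitely presented.

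The step I expect to be the crux is the construction and bookkeeping around the witness loops $\K_f$. The decisive structural fact is that, unlike for groups, the loop axioms impose \emph{no} cocycle condition on $f$, so a normalized $f$ may be prescribed freely; everything hinges on exploiting this freedom. The care lies in verifying that $\Sigma_f$ is genuinely additive over term trees (including the division operations, whose nodes contribute their own labels) and that the ``freshness'' of $\gamma$ really decouples the $r_P$-equation from the finite $R$-system. The remaining ingredients---the Mal'cev finite-generation reduction and the presentation-independence fact---are routine bookkeeping recorded at the outset.
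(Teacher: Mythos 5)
Your proposal is correct, but it takes a genuinely different route from the paper. The paper's proof is a normal-form argument: it invokes Evans' solution of the word problem for finitely presented loops to obtain a \emph{closed} finite presentation of $\L\times\M$ on a finite generating set $\overline{Z}\supseteq Z$, shows that the normal form of any element of $L\times\{1\}$ involves only generators from $\overline{X}\times\{1\}$, and then exhibits two distinct normal forms $(z_i\circ z_j)\cdot z_k$ and $z_k\cdot(z_i\circ z_j)$ representing the same element of $\L\times\M$, a contradiction. You instead manufacture, for each finite candidate set of relations $R$, a separating homomorphism into an explicit loop $\K_f=\algop{C\times\Z}{\cdot}$ built from an arbitrary normalized function $f\colon C\times C\to\Z$; the decisive point, which you correctly identify and which really is special to loops, is that the absence of associativity imposes no cocycle condition on $f$, so $f$ may be taken to be the indicator of a single fresh pair. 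Your mixed-associativity relations $(P\cdot v)\cdot u=P\cdot(v\cdot u)$ then play the role of the paper's mixed products, and the linear functional $\Sigma_f$ certifies that no finite $R$ entails all of them. What your route buys is self-containedness (no appeal to Evans' rewriting machinery) and an isolation of the structural cause of failure, namely the free prescribability of loop extensions; what the paper's route buys is brevity, at the price of quoting Evans' closure theorem. I verified the key computations: $\K_f$ is a loop for every normalized $f$ (divisions solve uniquely in the $\Z$-coordinate), the difference $\Sigma_f(\mathrm{LHS})-\Sigma_f(\mathrm{RHS})$ for $r_P$ is exactly the signed sum over your four labels, and the fresh label $((\gamma,\bar y_1),(\bar x_1,1))$ is distinct from the other three (using $\gamma\neq 1$, $\bar y_1\neq 1$) and from every label of $R$ (using $\gamma\notin S$), and is normalized-compatible (using $\bar x_1\neq 1$).

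Three small points to tidy up: (i) for ``$\L\times\M$ finitely presented implies $\L,\M$ finitely generated'' you need only Observation~\ref{obs1}, not Theorem~\ref{fg:Mal'cev}; (ii) take $P$ to be a \emph{term} over the letters $(\bar x_i,1)$, divisions allowed, rather than a product word, since the multiplicative closure of the generators need not exhaust $L$ --- your cancellation and freshness arguments still go through, because every label occurring inside $P$ has both components in $L\times\{1\}$, whereas the fresh label does not; (iii) state explicitly that without loss of generality $\L$ is the infinite factor, and record (with proof or reference) the standard fact that finite presentability of a finitely generated algebra in a variety does not depend on the chosen finite generating set, on which your reduction to the kernel $\theta$ of $F_\lL(Z)\to\L\times\M$ rests.
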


\begin{proof}
Suppose that $\L\times\M$ is finitely presented, and, without loss of generality, that $\L$ is infinite.
Both $\L$ and $\M$ must be finitely generated. Choose finite generating sets $X$ and $Y$ for $\L$ and $\M$ respectively such that neither contains the identity element.
Then the set $Z=(X\times\{1\})\cup (\{1\}\times Y)$
is a finite generating set for $\L\times\M$.

We are going to make use of Evans' solution to the word problem for finitely presented loops \cite{Ev:WPAA,Ev:MSD}.
As a consequence of \cite[Theorem 4.1]{Ev:MSD} and its proof there exists
a finite presentation $F_\lL(\overline{Z})/\Cg_{F_\lL(\overline{Z})}(R)$
for $\L\times\M$ with the following properties:
\begin{itemize}
\item
The set $\overline{Z}$ is finite, say $\overline{Z}=\{z_1,\ldots,z_n\}$ and contains $Z$.
(The assumption that the generators in $X$ and $Y$ are not equal to $1$ plays a role in ensuring that this is true.)
\item
All pairs in $R$ have the form $(z_i\circ z_j,z_k)$ where $\circ$ is one of the basic loop operations
$\cdot,\backslash,/$ and $z_i,z_j,z_k\in \overline{Z}$.
\item
Given a term $t(z_1,\dots,z_n)$ in variables $\overline{Z}$, the unique normal form of the element
$t^{\L\times\M}(z_1,\dots,z_m)$ can be obtained by successively, and in arbitrary order,
applying the following transformations: (1) replace $z_i\circ z_j$ by $z_k$ for any $(z_i\circ z_j,z_k)\in R$;
(2) replace $u$ by $v$ where $u=v$ is a defining identity for the variety $\lL$ and $|u|>|v|$.
\end{itemize}
(In Evans' terminology, $R$ is a \emph{closed} set of defining relations.)

Let $\overline{X}\subseteq L$ be such that
\[
\overline{X}\times \{1\}=\overline{Z}\cap (L\times\{1\}),
\]
and without loss suppose
\[
\overline{X}\times \{1\}=\{z_1,\ldots,z_m\}
\]
where $m\leq n$.
From $X\times \{1\}\subseteq Z\subseteq \overline{Z}$ it follows that $X\subseteq \overline{X}$,
and hence $\overline{X}$ is a generating set for $\L$.

\begin{claim}
For every $l\in L$ the normal form of $(l,1)\in L\times M$ has the form
$p(z_1,\dots,z_m)$ for some $m$-ary term $p$, i.e. it involves solely variables from $\overline{X}\times\{1\}$.
\end{claim}

\begin{proof}
Since $\overline{X}$ is a generating set for $\L\cong\L\times\{1\}$, there exists an $m$-ary term $q$ such
that $(l,1)=q^{\L\times\M}(z_1,\dots,z_m)$.
Consider the process of reducing the term $q(z_1,\dots,z_m)$ to its normal form according to the process described above.
If a rule $(z_i\circ z_j,z_k)$ is applied, then necessarily $z_i,z_j\in\overline{X}\times\{1\}$; but then
$z_k\in \overline{X}\times\{1\}$, as otherwise the equality would not hold in $\L\times\M$.
If a loop identity is applied, replacing the longer term by the shorter, no new variables are introduced.
It follows that after the first step we obtain another term over variables $z_1,\dots,z_m$, and an inductive argument completes the proof of the claim.
\end{proof}

Returning to the proof of the theorem, since $\L$ is infinite and $\overline{X}$ is finite, 
the set $(L\times{1})\setminus \{z_1,\dots,z_m\}$ is non-empty, and in particular there will exist a normal
form $z_i\circ z_j$ ($1\leq i,j\leq m$), which represents an element $(l,1)$ from this set.
Since $\M$ is non-trivial, the set $Y$ is non-empty; so let $y\in Y$ be arbitrary.
From $\{1\}\times Y\subseteq Z\subseteq\overline{Z}$ it follows that $(1,y)=z_k$ for some $k$ ($m<k\leq n$).
Now consider the terms $(z_i\circ z_j)\cdot z_k$ and $z_k\cdot (z_i\circ z_j)$.
Clearly, no rule from $R$ can be applied to either of those two terms, and neither can a loop identity.
Hence $(z_i\circ z_j)\cdot z_k$ and $z_k\cdot (z_i\circ z_j)$ are distinct normal forms with respect to the presentation $\langle \overline{X} | R\rangle$. On the other hand, these two terms clearly represent the same element $(l,y)$ of $\L\times \M$. This contradicts the fact that $\L\times\M\cong F_\lL(\overline{Z})/\Cg_{F_\lL(\overline{Z})}(R)$, and hence $\L\times\M$ is not finitely presented.
\end{proof}

Further classes in which this direction fails, even though the underlying result for finite generation holds, are
 provided by idempotent magmas (or groupoids, where we have a single
 binary operation symbol $\cdot$ and the defining identity $x\cdot x = x$) and lattices.

\begin{thm} \label{th:Pmagma}
 In the variety $\iI$ of idempotent magmas (groupoids),
 the direct product of two free algebras on two generators $F_\iI(x_1,x_2)^2$ is not finitely presented
 (even though it is finitely generated).
\end{thm}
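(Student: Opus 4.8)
The plan is to reduce the assertion to the failure of a single congruence to be finitely generated, and then to produce relations whose ``essential collapse'' can be pushed arbitrarily deep into the term tree. Write $F=F_\iI(x_1,x_2)$. Since $\iI$ is idempotent, Theorem~\ref{th:Gidempotent} already gives that $F\times F$ is finitely generated, explicitly by $Z=\{(x_i,x_j)\setsuchthat i,j\in\{1,2\}\}$; write $z_{ij}=(x_i,x_j)$. Let $\phi\colon F_\iI(Z)\to F\times F$ be the homomorphism from the free idempotent magma on the four generators $Z$ determined by $z_{ij}\mapsto(x_i,x_j)$, and set $\theta=\ker\phi$. Then $F\times F\cong F_\iI(Z)/\theta$ with $F_\iI(Z)$ finitely generated and free, so $F\times F$ is finitely presented if and only if $\theta$ is a finitely generated congruence. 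Composing $\phi$ with the two projections yields $\alpha,\beta\colon F_\iI(Z)\to F$ with $\alpha(z_{ij})=x_i$ and $\beta(z_{ij})=x_j$, so $(s,t)\in\theta$ iff $\alpha(s)=\alpha(t)$ and $\beta(s)=\beta(t)$; that is, $\theta=\ker\alpha\meet\ker\beta$. Identifying generators in a free algebra returns the free algebra on the quotiented generating set, so $F_\iI(Z)/\ker\alpha\cong F$ and $\ker\alpha=\Cg_{F_\iI(Z)}(\{(z_{11},z_{12}),(z_{21},z_{22})\})$ is finitely generated, and likewise $\ker\beta$. Thus the task becomes showing that the meet of these two finitely generated congruences is not finitely generated.

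The relations in $\theta$ arise because an element of $F$ such as $x_1x_2$ can be reached both as the genuine product $x_1\cdot x_2$ and as the idempotent collapse of $(x_1x_2)\cdot(x_1x_2)$, and these two histories may be matched differently across the two coordinates. The smallest instance is given by the two distinct elements
\[
(z_{12}z_{21})(z_{11}z_{22}) \quad\text{and}\quad (z_{12}z_{11})(z_{21}z_{22})
\]
of $F_\iI(Z)$, which nevertheless both map under $\phi$ to $(x_1x_2,\,(x_2x_1)(x_1x_2))$: the second coordinate is the product $(x_2x_1)(x_1x_2)$ in both cases, whereas in the first coordinate the left term is the collapse $(x_1x_2)(x_1x_2)\to x_1x_2$ while the right term is the genuine product $x_1\cdot x_2$. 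The plan is then to build, for each $n$, a pair $(s_n,t_n)\in\theta$ by iterating this gadget (substituting it into itself, and using longer reduced elements of $F$ in place of $x_1x_2$) so that $s_n$ and $t_n$ have length growing with $n$ and the single reduction reconciling their $\phi$-images only takes effect at depth $n$ in the term tree.

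To conclude, suppose for contradiction that $\theta=\Cg_{F_\iI(Z)}(R)$ with $R$ finite, and let $N$ bound the lengths of all terms occurring in $R$. Membership $(s,t)\in\Cg(R)$ means $s$ and $t$ are joined by a finite chain of elementary rewrites, each replacing a subterm $p\sigma$ by $q\sigma$ for some $(p,q)\in R\cup R^{-1}$ and substitution $\sigma$, and every term in such a chain stays in the fibre $\phi^{-1}(\phi(s))$. Choosing $n>N$ and $(s,t)=(s_n,t_n)$, I would force a contradiction by attaching to the terms of the fibre $\phi^{-1}(\phi(s_n))$ a quantity that is invariant under every rewrite coming from a relation of length $\le N$ (such a rewrite alters only a bounded piece of the term and so cannot reach the depth-$n$ collapse) yet takes different values on $s_n$ and $t_n$; equivalently, $s_n$ and $t_n$ lie in distinct connected components of the fibre under bounded local rewriting. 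This last point is the heart of the argument and the step I expect to be the main obstacle: one must isolate an invariant of factorisation-with-collapses in the free idempotent magma that short relations are powerless to change, which calls for a careful analysis of reduced forms. Granting it, $R\subseteq\theta$ has length $\le N$, so $(s_n,t_n)\notin\Cg(R)=\theta$, contradicting $(s_n,t_n)\in\theta$; hence $\theta$ is not finitely generated and $F\times F$ is not finitely presented.
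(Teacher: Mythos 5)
Your framework coincides with the paper's: you reduce finite presentability to finite generation of the kernel congruence $\theta=\ker\phi$ on $F_\iI(Z)$ (the paper's $h$ is your $\phi$), and your gadget is correct --- $(z_{12}z_{21})(z_{11}z_{22})$ and $(z_{12}z_{11})(z_{21}z_{22})$ are indeed distinct elements of $F_\iI(Z)$ lying in one $\theta$-class. But there is a genuine gap, and you have flagged it yourself: the entire content of the theorem is the step you ``grant'', namely exhibiting, for each bound $N$, a pair in $\theta$ that no chain of rewrites coming from a finite $R\subseteq\theta$ with terms of length $\le N$ can join. Neither the pairs $(s_n,t_n)$ nor the invariant are constructed, so what remains is a plan rather than a proof. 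Worse, the heuristic you offer for why such an invariant should exist --- that a short relation ``alters only a bounded piece of the term and so cannot reach the depth-$n$ collapse'' --- is unsound: congruence generation inserts a relation into an \emph{arbitrary} unary polynomial (context), so a relation of bounded length can act at arbitrary depth of the term tree; boundedness restricts the size of the replaced subterm, not where it sits. (A separate technical slip: your elementary steps replace $p\sigma$ by $q\sigma$ for a substitution $\sigma$, which is fully invariant congruence generation, not congruence generation; this only enlarges the closure, so it is not fatal, but it makes the non-connectivity you would have to prove strictly stronger.)

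The paper closes exactly this gap by a rigidity argument instead of a depth argument, and the contrast is instructive. With $2k-1$ bounding the lengths of the relations, it takes the single witness $u=(x_{11}x_{21}\cdots x_{21})(x_{12}x_{22}\cdots x_{22})$ (left-associated, $k$ factors $x_{21}$ and $k$ factors $x_{22}$) and proves: (i) any term $w$ equal to $u$ modulo idempotence decomposes so that \emph{every} subterm of $w$ of length $\le 2k-1$ involves only the variables $\{x_{11},x_{21}\}$ or only $\{x_{12},x_{22}\}$; (ii) an element represented by such a ``one-sided'' term is $\theta$-congruent to nothing but itself, since the two substitutions defining $\theta$ pin it down. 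Consequently no relation of $R$ (a nontrivial $\theta$-pair of length $\le 2k-1$) can be applied nontrivially to any representative of $u$, so the $\Cg_{F_\iI(Z)}(R)$-class of $u$ is the singleton $\{u\}$, whereas its $\theta$-class also contains the distinct element $(x_{11}x_{12})(x_{21}x_{22})\cdots(x_{21}x_{22})$. So the true obstruction is not that rewrites cannot penetrate deep into the term, but that every small subterm of every representative of a well-chosen witness is confined to one coordinate's sub-magma and hence is $\theta$-rigid; if you want to complete your proposal, an invariant of this kind --- membership in the singleton rewrite-class of a witness with long one-sided tails --- is what you need to build, and it replaces your unspecified family $(s_n,t_n)$ by one explicit element per bound.
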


\begin{proof}
 Consider the homomorphism
\[ h\colon F_\iI(x_{11},x_{12},x_{21},x_{22}) \to F_\iI(x_1,x_2)^2 \]
 that is defined by
\[ h(x_{ij}) = (x_i,x_j) \text{ for } i,j\in\{1,2\}. \] 
 We note that $h$ is onto by the idempotence of $\cdot$ (cf. the proof of
 Theorem~\ref{th:Gidempotent}). In particular $F_\iI(x_1,x_2)^2$ is finitely generated.

 To prove that $F_\iI(x_1,x_2)^2$ is not finitely presented, it suffices to show that $\ker h$ is not generated
 as a congruence by finitely many elements. So seeking a contradiction, we suppose that
 there exist $n\in\N$ and $(s_1,t_1),\dots,(s_n,t_n)\in\ker h$ such that
\[ \Cg_{F_\iI(x_1,x_2)^2}( (s_1,t_1),\dots,(s_n,t_n) ) = \ker h. \]
 Without loss of generality $s_i\neq t_i$ for all $i\leq n$.
 Note that every term over the binary operation $\cdot$ has odd length.
 Let $2k-1$ be the maximal length of the terms $s_1,\dots,s_n,t_1,\dots,t_n$. 

 Observe that for terms $s(x_{11},x_{12},x_{21},x_{22}),t(x_{11},x_{12},x_{21},x_{22})$ we have
\begin{equation} \label{eq:kerh}
 (s,t)\in\ker h \text{ iff } s(x_1,x_1,x_2,x_2) = t(x_1,x_1,x_2,x_2), s(x_1,x_2,x_1,x_2) = t(x_1,x_2,x_1,x_2). 
\end{equation}
 In the absence of parentheses we will associate all products on the left in what follows. Let
\[ u  = (x_{11}\underbrace{x_{21}\cdots x_{21}}_{k})(x_{12}\underbrace{x_{22}\cdots x_{22}}_{k}) \]
 and consider its congruence class modulo $\ker h$. 

 
 We first note that the defining identity of $\iI$ yields a confluent rewriting system where
 $tt$ can be replaced by $t$ for any term $t$. Hence we have normal forms for terms in the language
 of $\iI$ which are obtained by repeatedly performing such replacements in any order.
 Clearly $u$ is already in normal form.

 Let $w$ be a term that reduces to $u$ modulo the identities in $\iI$. Then $w = w_1w_2$
 for terms $w_1,w_2$ and one of the following holds:
\begin{enumerate}
\item either both $w_1$ and $w_2$ can be reduced to $u$ or
\item $w_1$ reduces to $x_{11}\underbrace{x_{21}\cdots x_{21}}_{k}$ and $w_2$ reduces to
 $x_{12}\underbrace{x_{22}\cdots x_{22}}_{k}$.
\end{enumerate}
 Consequently every subterm $t$ of $w$ of length at most $2k-1$ reduces to a subterm of $x_{11}x_{21}\cdots x_{21}$
 or of $x_{12}x_{22}\cdots x_{22}$. So $t$ is a term either in $\{x_{11},x_{21}\}$ or in $\{x_{12},x_{22}\}$.
 Hence $t$ is only congruent to itself modulo $\ker h$ by~\eqref{eq:kerh}. In particular none of the relations
 $(s_1,t_1),\dots,(s_n,t_n)$ can be applied to $t$ or to $w$. It follows that $u$ is congruent only to
 itself modulo $\ker h$. This contradicts the fact that by~\eqref{eq:kerh} and idempotence
\[  u \equiv (x_{11}x_{12})\underbrace{(x_{21}x_{22})\cdots(x_{21}x_{22})}_k \mod \ker h. \] 
 %
 Thus $\ker h$ is not finitely generated, and $F_\iI(x_1,x_2)^2$
 is not finitely presented. 
\end{proof}

\begin{thm} \label{th:Lfp}
 In the variety of lattices, no direct product of two infinite lattices is finitely presented.
\end{thm}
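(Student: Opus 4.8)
The plan is to argue by contradiction, mirroring the proof of Theorem~\ref{th:Pmagma} and exploiting two features of lattices: they are idempotent, and the word problem for free lattices has a canonical-form solution (Whitman, Dean). Suppose $\L\times\M$ is finitely presented, where $\L,\M$ are lattices and, without loss of generality, $\L$ is infinite. Since a finitely presented algebra is finitely generated, Observation~\ref{obs1} gives that $\L$ and $\M$ are both finitely generated. Hence we may fix a finite set $Z$ and a surjective homomorphism $h\colon F_\lL(Z)\to\L\times\M$, and finite presentability means exactly that $\ker h$ is a finitely generated congruence of $F_\lL(Z)$. It therefore suffices to prove that $\ker h$ is \emph{not} finitely generated.

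To that end I would fix an arbitrary finite $R\subseteq\ker h$, let $N$ be the maximal length of a term occurring in $R$, and produce a pair lying in $\ker h$ but not in $\Cg_{F_\lL(Z)}(R)$. Recall that every element of the free lattice $F_\lL(Z)$ has a canonical (Whitman) form, reached by a terminating reduction modulo the lattice identities; and, roughly speaking, a pair lies in $\Cg_{F_\lL(Z)}(R)$ precisely when its two canonical forms are linked by a finite chain of elementary steps, each of which either rewrites a subterm using a lattice identity or replaces a subterm matching one side of a relation in $R$ by the other side. Because every relation in $R$ has length at most $N$, a step of the second kind can act only on a subterm whose canonical form has length at most $N$.

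The witness itself comes from the infiniteness of $\L$. An infinite lattice contains an infinite chain or an infinite antichain, and in either case one can choose the generators $Z$ and then build, for $n$ exceeding any bound determined by $N$, two distinct canonical terms $u_n$ and $u_n'$ representing the \emph{same} element of $\L\times\M$ (so $(u_n,u_n')\in\ker h$), arranged so that every subterm of $u_n$ of length at most $N$ is \emph{rigid}: as an element of $\L\times\M$ it is represented by no short canonical term other than itself. Then no relation of $R$ applies to $u_n$ or to any of its subterms, and since $u_n$ is already canonical no length-decreasing identity applies either; hence the $\Cg_{F_\lL(Z)}(R)$-class of $u_n$ is the singleton $\{u_n\}$, while $u_n'\neq u_n$ lies in its $\ker h$-class. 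Thus $\Cg_{F_\lL(Z)}(R)\neq\ker h$, and as $R$ was arbitrary, $\ker h$ is not finitely generated, contradicting finite presentability.

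The main obstacle is the bookkeeping behind the last two steps. Unlike the single idempotent law governing $\iI$ in Theorem~\ref{th:Pmagma}, the lattice identities (associativity, commutativity, absorption, and idempotence) permit far more rewriting, so verifying that the long element $u_n$ is genuinely in canonical form, and that all of its short subterms are rigid, demands a careful analysis of Whitman's condition and of how joins and meets assembled from an infinite chain or antichain reduce. A more robust way to run the same contradiction, avoiding the delicate `isolation' claim, is to replace it by a finite-length truncation of $\L\times\M$: construct a lattice $\D$ together with a homomorphism $g\colon F_\lL(Z)\to\D$ satisfying $R\subseteq\ker g$ yet $(u_n,u_n')\notin\ker g$, which again forces $(u_n,u_n')\notin\Cg_{F_\lL(Z)}(R)$. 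Building such a $\D$ that honours all lattice identities while still separating the witness is where the genuine work of the proof lies.
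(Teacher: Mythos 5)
There is a genuine gap here, and it is not just the bookkeeping you defer at the end. Your construction of the witness invokes only the infiniteness of $\L$: the pair $(u_n,u_n')\in\ker h$ is to be manufactured ``from an infinite chain or antichain in $\L$,'' and neither the infiniteness nor even the nontriviality of $\M$ enters anywhere in the outline. No argument of that shape can be completed. Indeed, if $\M$ is the one-element lattice then $\L\times\M\cong\L$, and there exist infinite finitely presented lattices --- for instance the free lattice $F_\lL(x_1,x_2,x_3)$, presented by the empty set of relations. For such $\L$ and $\M$ one may take $h$ to be an isomorphism, so $\ker h$ is the trivial congruence and contains no pair $(u_n,u_n')$ with $u_n\neq u_n'$ at all, rigid or otherwise; yet $\L\times\M$ is finitely presented. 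So infiniteness of one factor alone cannot yield the contradiction. Even if you additionally assume $\M$ nontrivial (and finite), completing your plan would resolve --- negatively --- the question posed as open in the paper's conclusion, namely whether an infinite finitely presented lattice $\L$ and a nontrivial lattice $\M$ can have $\L\times\M$ finitely presented. Thus the hypothesis that \emph{both} factors are infinite must be used essentially, and your sketch gives no indication of where it would enter the construction of the witness or of the separating lattice $\D$. Note that in the magma proof of Theorem~\ref{th:Pmagma}, which you take as a model, the witness $u$ genuinely interleaves the two coordinates; an analogue for lattices would have to do the same with an infinite configuration in \emph{each} factor, and proving rigidity against all lattice identities (absorption can lengthen terms, unlike the single rule $tt\to t$) is precisely the difficulty the authors chose to avoid.

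The paper's actual proof is structural rather than syntactic, and uses both infinitenesses explicitly. By the Gr\"atzer--Huhn--Lakser theorem, a finitely presented lattice has a congruence of finite index all of whose classes embed into free lattices; since every congruence on a direct product of lattices is a product congruence, this yields $\alpha\in\Con(\A)$, $\beta\in\Con(\B)$ of finite index such that every $\alpha\times\beta$-class embeds into a free lattice. Because $\A$ and $\B$ are \emph{both} infinite, some classes $a/\alpha$ and $b/\beta$ are infinite; by J\'onsson's theorem on sublattices of free lattices each contains an infinite chain, in particular a copy of $\mathbf{C}_4$, so $\mathbf{C}_4\times\mathbf{C}_4$ embeds into the single class $(a,b)/(\alpha\times\beta)$ and hence into a free lattice. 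This contradicts Whitman's condition, which holds in free lattices but fails in $\mathbf{C}_4\times\mathbf{C}_4$. If you wish to pursue a rewriting-style argument, it would have to be rebuilt around a witness drawing on infinite chains in both factors simultaneously; as it stands, the proposal cannot be repaired without a fundamentally new idea.
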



\begin{proof}
 Seeking a contradiction let $\A,\B$ be infinite lattices such that $\A\times\B$ is finitely presented. 
 By~\cite[Theorem 1]{GHL:SFPL} (see also~\cite[Theorem 95]{Gr:LTF}) every finitely presented lattice
 $\L$ has a congruence $\rho$ such that $\L/\rho$ is finite and every congruence class embeds into some
 free lattice. Note that each congruence class is a sublattice because of the idempotence of the operations.
 Since all congruences on $\A\times\B$
 are product congruences (by \cite[Theorem 149]{Gr:LTF} and \cite[Lemma IV.11.10]{burris81}),
 this implies we have $\alpha\in\Con(\A),\beta\in\Con(\B)$ such that $\A/\alpha$ and
 $\B/\beta$ are finite and every $\alpha\times\beta$-class in $\A\times\B$ embeds into a free lattice.
 Let $a\in A, b\in B$ such that $a/\alpha$ and $b/\beta$ are infinite.
 By a result of J\'onsson~\cite[Theorem 2.7]{Jo:SFL} (see also~\cite[Theorem 5.4.5]{Gr:LTF})
 every sublattice of finite length (i.e., without infinite chain) in a free lattice is finite.
 Since $a/\alpha$ and $b/\beta$ are infinite and embed into a free lattice, each must contain either a copy of the
 infinite chain $\Nl = \algop{\N}{\max,\min}$ or its dual $\Nl^\prime=\algop{\N}{\min,\max}$.
In particular, both $a/\alpha$ and $b/\beta$ contain a copy of the 4-element chain
$\mathbf{C}_4=\algop{\{0,1,2,3\}}{\max,\min}$, and
so $\mathbf{C}_4\times\mathbf{C}_4$ embeds into $(a,b)/(\alpha\times\beta)$ which in turn embeds
 into some free lattice.

 Now every free lattice $\F$ satisfies Whitman's condition~\cite[Theorem 5.44]{Gr:LTF},
namely that for all $x,y,u,v\in F$ we have
\[ x\wedge y \leq u\vee v \Longleftrightarrow x \leq u\vee v \text{ or } y \leq u\vee v \text{ or }  x\wedge y \leq u \text{ or }  x\wedge y \leq v. \]
 But Whitman's condition does not hold in $\mathbf{C}_4\times\mathbf{C}_4$ as witnessed by $x=(1,3),y=(3,1),u=(0,2),v=(2,0)$.
 So $(a,b)/(\alpha\times\beta)$ does not embed in any free lattice contradicting our assumption.
 Thus $\A\times\B$ is not finitely presented.
\end{proof}


Finally we recall that the situation for semigroups is quite subtle, and is dealt in some detail in \cite{Robertson98,Araujo00}.

\section{Residual finiteness}
\label{secrf}

 An algebra $\A$ is \emph{residually finite} if for all distinct $a,b\in A$ there exists a finite algebra $\C$
 and a homomorphism $f$ from $\A$ to $\C$ such that $f(a) \neq f(b)$. Equivalently, there exists a congruence
 $\rho$ of $\A$ such that $A/\rho$ is finite and $a\not\equiv_\rho b$.
 Recall that it was an easy and completely general fact that if $\A\times \B$ is finitely generated, then so are
 $\A$ and $\B$.
In the case of residual finiteness it is precisely the other direction that always holds:

\begin{prop}
If $\A$ and $\B$ are residually finite algebras, then so is $\A\times\B$.
\end{prop}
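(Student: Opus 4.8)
The plan is to exploit the fact, recorded earlier in the excerpt, that the natural projections $\pi_A\colon\A\times\B\to\A$ and $\pi_B\colon\A\times\B\to\B$ are homomorphisms. This means that any finite-quotient separation available in a factor can be pulled back to $\A\times\B$ simply by composing with the relevant projection.

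First I would take two distinct elements of the product, say $(a_1,b_1)\neq (a_2,b_2)$ in $A\times B$, and observe that distinctness forces $a_1\neq a_2$ or $b_1\neq b_2$. By the symmetry of the two coordinates I may assume $a_1\neq a_2$.

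Next, invoking the residual finiteness of $\A$, I would produce a finite algebra $\C$ and a homomorphism $f\colon\A\to\C$ with $f(a_1)\neq f(a_2)$. Then $f\circ\pi_A\colon\A\times\B\to\C$ is a homomorphism into a finite algebra, being a composition of homomorphisms, and it separates the chosen pair, since
\[
(f\circ\pi_A)(a_i,b_i)=f(a_i)\quad\text{for }i=1,2.
\]
The case $b_1\neq b_2$ is handled symmetrically via $\pi_B$ and the residual finiteness of $\B$. Since the pair of distinct elements was arbitrary, $\A\times\B$ is residually finite.

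There is essentially no obstacle in this direction. In contrast to the converse (deducing residual finiteness of the factors from that of the product, the genuinely delicate phenomenon alluded to in the introduction for semigroups), the present implication is immediate once one notes that the projections are homomorphisms and that post-composing a separating finite-quotient map with a homomorphism preserves the separation. The only point requiring a moment's care is the disjunctive reduction to a single coordinate, which is purely set-theoretic and needs no algebraic input beyond the observation that two pairs differ precisely when they differ in some coordinate.
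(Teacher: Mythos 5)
Your argument is correct and is essentially identical to the paper's own proof: both reduce to a coordinate where the two pairs differ, invoke residual finiteness of that factor to obtain a separating homomorphism $f$ into a finite algebra, and compose with the corresponding projection $\pi_A$ (or $\pi_B$) to separate the pair in $\A\times\B$. Nothing further is needed.
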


\begin{proof}
For distinct $(a,b),(c,d)\in A\times B$ we must have $a\neq c$ or $b\neq d$. Without loss assume $a\neq c$.
Since $\A$ is residually finite, there exists a homomorphism $f:\A\rightarrow \C$ with $\C$ finite and $f(a)\neq f(c)$. The composition of $f$ with the projection $\pi_\A : \A\times\B \rightarrow \A$ yields a homomorphism $\A\times\B\rightarrow \C$ that separates $(a,b)$ and $(c,d)$.
\end{proof}

In fact, the converse is often just as obvious. For instance, it is clear that residual finiteness is a hereditary
 property, i.e. if $\A$ is residually finite and $\B$ is a subalgebra of $\A$, then $\B$ is residually finite as well.
 Thus the if and only if statement holds for all classical algebraic structures, monoids, loops, idempotent algebras,
 in particular lattices, and many other classes, because factors embed into the direct product.
 In \cite{Gray09} the authors prove that the analogous statement holds for semigroups,
 albeit for not altogether trivial reasons, and provide examples which show that it does sometimes fail (in particular for unary algebras).
 Here we prove the result for congruence modular varieties.

\begin{thm}  \label{th:RCM}
 Let $\A$ and $\B$ be algebras in a congruence modular variety.
 Then $\A\times\B$ is residually finite if and only if both $\A$ and $\B$ are residually finite. 
\end{thm}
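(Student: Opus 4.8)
The forward implication is precisely the Proposition immediately preceding this theorem (residual finiteness passes to direct products in complete generality), so my plan is to concentrate entirely on the converse: assuming $\A\times\B$ is residually finite, I would show that each factor, say $\A$, is residually finite. The naive route through the projection $\pi_A\colon\A\times\B\to\A$ is doomed, since residual finiteness need not survive homomorphic images; and, in the absence of a neutral element or idempotent, $\A$ need not embed into $\A\times\B$, so the hereditary argument invoked for the classical structures is also unavailable. Congruence modularity must therefore be used essentially, and the natural tool is Lemma~\ref{le:rs}\eqref{it:rs}.

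Concretely, I would fix distinct $a,a'\in A$ and choose an arbitrary $b_0\in B$; then $(a,b_0)\neq(a',b_0)$, so residual finiteness of $\A\times\B$ supplies a finite-index congruence $\rho\in\Con(\A\times\B)$ with $(a,b_0)\not\equiv_\rho(a',b_0)$. I would then feed $\rho$ into Lemma~\ref{le:rs}\eqref{it:rs} to obtain $\sigma\in\Con(\A)$ with $\rho\wedge(1_A\times 0_B)=\sigma\times 0_B$, and exploit the fact that this meet is a \emph{product} congruence.

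The crucial observation — the step I expect to do the real work — is that for every $b\in B$ the pair $((a,b),(a',b))$ automatically lies in $1_A\times 0_B$, so $((a,b),(a',b))\in\rho$ holds if and only if $((a,b),(a',b))\in\rho\wedge(1_A\times 0_B)=\sigma\times 0_B$, i.e. if and only if $(a,a')\in\sigma$. In other words, the product form of $\rho\wedge(1_A\times 0_B)$ collapses the troublesome existential quantifier in the description of $\sigma$: the relation $(a,b)\equiv_\rho(a',b)$ holds for \emph{some} $b$ exactly when it holds for \emph{all} $b$, independently of $b$. This uniformity is exactly what fails for general (e.g. unary) algebras, and it is the precise point at which congruence modularity is indispensable; I regard it as the heart of the argument.

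Two consequences then close the proof. First, from $(a,b_0)\not\equiv_\rho(a',b_0)$ the equivalence gives $(a,a')\notin\sigma$, so $\sigma$ separates $a$ from $a'$. Second, the assignment $[a]_\sigma\mapsto[(a,b_0)]_\rho$ is a well-defined injection of the set $A/\sigma$ into the finite set $(A\times B)/\rho$ (both well-definedness and injectivity follow from the same equivalence), whence $\sigma$ has finite index and $\A/\sigma$ is a finite algebra. Thus the quotient map $\A\to\A/\sigma$ separates $a$ and $a'$ through a finite algebra; as $a,a'$ were arbitrary, $\A$ is residually finite, and $\B$ follows by the symmetric argument. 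The one mild subtlety I would be careful about is that $a\mapsto(a,b_0)$ is not a homomorphism, so the displayed injection must be used purely as a counting device to bound $|A/\sigma|$, not as a map of algebras.
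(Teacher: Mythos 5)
Your proof is correct, and it takes a genuinely different---and substantially shorter---route than the paper's. Both arguments begin identically (distinct $a,a'\in A$, a fixed $b_0\in B$, a finite-index $\rho\in\Con(\A\times\B)$ separating $(a,b_0)$ from $(a',b_0)$) and both invoke Lemma~\ref{le:rs}\eqref{it:rs}, but with very different strength. You use the lemma's full conclusion, the \emph{equality} $\rho\wedge(1_A\times 0_B)=\sigma\times 0_B$: since $((u,b),(v,b))$ lies in $1_A\times 0_B$ automatically, this equality gives that $(u,b)\equiv_\rho(v,b)$ holds for one $b$ iff $(u,v)\in\sigma$ iff it holds for every $b$, so the existential quantifier defining $\sigma$ collapses; separation of $a$ from $a'$ by $\sigma$ is then immediate, and your injection $[u]_\sigma\mapsto[(u,b_0)]_\rho$ bounds $|A/\sigma|$ exactly as the paper's pigeonhole count does. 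The paper, by contrast, extracts from the lemma only the weaker fact that the existentially defined $\sigma$ is a congruence, and obtains the separation by a much longer route: it passes to the minimal product congruence $\alpha\times\beta$ above the initial separating congruence $\rho_1$ and the maximal one below it, reduces to the skew case, proves $[\beta,\beta]=0_B$ by commutator theory, chooses representatives $b_1,\dots,b_n$ of the $\beta$-classes, intersects $n$ separating finite-index congruences, and finally applies a difference term (exact on abelian congruences) to transport a relation $(a_1,b)\equiv_\rho(a_2,b)$ at an arbitrary $b$ to a representative $b_i$. What your route buys is economy and transparency: all the congruence modularity required is concentrated in the single modular-law computation inside the proof of Lemma~\ref{le:rs}\eqref{it:rs}, with no commutator theory, no difference term, and no intersection over representatives. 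The one caveat---which you correctly flag as the heart of the matter---is that your argument stands or falls with the exact product form $\rho\wedge(1_A\times 0_B)=\sigma\times 0_B$; if the lemma asserted only that $\sigma$ is a congruence, your uniformity claim would not follow, and something like the paper's machinery would become unavoidable.
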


\begin{proof}
 Our argument requires some commutator theory for congruence modular varieties as developed in~\cite{FM:CTC}.
 Assume that $\A\times\B$ is residually finite. To show that $\A$ is residually finite, we consider distinct
 elements $a_1,a_2\in A$ and fix some $b_1\in B$. Since $\A\times\B$ is residually finite,
 we have $\rho_1\in\Con(\A\times\B)$ of finite index such that $(a_1,b_1) \not\equiv_{\rho_1} (a_2,b_1)$.

 Note that the product congruences of $\A\times\B$ form a sublattice of $\Con(\A\times\B)$. 
 In particular, there exists a minimal product congruence $\alpha\times\beta$ above $\rho_1$ 
 and a maximal product congruence $\gamma\times\delta$ below $\rho_1$. 
Note that $\alpha\times\beta$ must have finite index, because it contains $\rho_1$ which has finite index; hence both $\alpha$ and $\beta$ are congruences of finite index.
If $\rho_1$ itself is a product of
 congruences, then $\alpha=\gamma$ separates  $a_1,a_2$ and the result is proved.
 
 So we assume that $\rho_1 < \alpha\times\beta$ is a skew congruence. By factoring $\A\times\B$ by
 $\gamma\times\delta$ we may assume that $0_A\times 0_B$ is the only product congruence below $\rho_1$.
 We first claim that
\begin{equation} \label{eq:abc}
 \beta \text{ is abelian, i.e. } [\beta,\beta] = 0_B.
\end{equation}
 From the basic properties of the commutator~\cite[Propositon 3.4]{FM:CTC} we have that
 $[\rho_1,0_A\times\beta] \leq\rho_1$ and
 $[1_A\times 0_B,0_A\times\beta] = [1_A,0_A]\times [0_B,\beta] = 0_A\times 0_B \leq\rho_1$.
 Since the commutator is join distributive by~\cite[Exercise 2.2]{FM:CTC}, this yields
\begin{equation} \label{eq:rv01}
 [\rho_1\vee (1_A\times 0_B),0_A\times \beta] \leq\rho_1. 
\end{equation}
 By Lemma~\ref{le:rs} we have that $\rho_1\vee (1_A\times 0_B)$ is a product congruence that
 contains $\rho_1$ and hence contains $\alpha\times\beta$ by minimality. Since the commutator
 is monotone~\cite[Proposition 3.4]{FM:CTC},~\eqref{eq:rv01} yields 
\[ [\alpha\times\beta,0_A\times\beta] \leq\rho_1. \]
 So $[\alpha\times\beta,0_A\times\beta] = 0_A\times [\beta,\beta]$ is a product congruence
 that is contained in $\rho_1$ and hence it is $0_A\times 0_B$. Thus~\eqref{eq:abc} is proved.

Recall that $\beta$ has finite index, say $|B/\beta| =: n$, and extend $b_1$ to a complete set of representatives $b_1,\dots,b_n$ 
 for the $\beta$-classes in $B$. For every $i\in\{2,\dots,n\}$ we have $\rho_i\in\Con(\A\times\B)$
 of finite index such that $(a_1,b_i) \not\equiv_{\rho_i} (a_2,b_i)$.
 Then $\rho = \bigcap_{i=1}^n \rho_i$ has finite index in $\A\times\B$, $\rho < \alpha\times\beta$ and
\begin{equation} \label{eq:abi}
 (a_1,b_i) \not\equiv_{\rho} (a_2,b_i) \text{ for all } i\in\{1,\dots,n\}. 
\end{equation}
 By Lemma~\ref{le:rs}~\eqref{it:rs} we have a congruence
\[ \sigma = \{ (u,v)\in A^2 \setsuchthat (u,b)\equiv_\rho (v,b) \text{ for some } b\in B \} \] 
 on $\A$. 

 Let $m = |(A\times B)/\rho|$. We claim that $|A/\sigma| \leq m$. Suppose otherwise that
 $u_1,\dots,u_{m+1}\in A$ are pairwise distinct modulo $\sigma$. 
 Then for any $b\in B$ we have that $(u_1,b)\dots,(u_{m+1},b)$ are pairwise distinct modulo
 $\rho$ which contradicts our assumption that $\rho$ has only $m$ classes.
 Hence $|A/\sigma|\leq |A\times B/\rho|$.

 Seeking a contradiction we now suppose that $a_1\equiv_\sigma a_2$. Then we have
 $b\in B$ and some $i\in\{1,\dots,n\}$ such that $b\equiv_\beta b_i$ and 
\begin{align*} 
 (a_1,b_i) & = (a_1,b_i) \\ 
 (a_1,b) & = (a_1,b) \\
 (a_1,b) & \equiv_\rho (a_2,b)
\end{align*}
 By~\cite[Theorem 5.5]{FM:CTC} each congruence modular variety has a \emph{difference term},
 which is a ternary term $d$ which satisfies the following for all algebras $\C$ in the variety:
 \[ d^\C(x,x,y) = y \text{ and } d^\C(x,y,y) \equiv x \bmod [\Cg_\C(x,y),\Cg_\C(x,y)] \text{ for all } x,y\in C. \]
 Together with~\eqref{eq:abc}, $b\equiv_\beta b_i$ implies $d^\B(b_i,b,b) = b_i$. So applying $d$ to the
 three equations above yields
\[ (a_1,b_i) \equiv_\rho (a_2,b_i) \]
 which contradicts~\eqref{eq:abi}. Hence $a_1\not\equiv_\sigma a_2$ and $\A$ is residually finite.
\end{proof}

 As an immediate consequence, we obtain the if and only if statement for all the `classical' algebraic
 structures:

\begin{cor}
 Let $\vV$ be any of the following varieties: groups, rings, modules over a ring $R$,
 associative algebras or Lie algebras over a field, lattices, quasigroups, or loops, and let $\A,\B\in\vV$.
 Then $\A\times\B$ is residually finite if and only if $\A$ and $\B$ are residually finite.
\end{cor}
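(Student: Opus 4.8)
The plan is to derive this corollary as an immediate instance of Theorem~\ref{th:RCM}, which already establishes the full if-and-only-if for residual finiteness under direct products in \emph{any} congruence modular variety. The entire task therefore reduces to verifying that each of the listed varieties is congruence modular, after which Theorem~\ref{th:RCM} applies verbatim and supplies both implications at once (the easy direction being the general Proposition preceding it, which Theorem~\ref{th:RCM} subsumes).

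First I would dispatch the group-like and classical structures by exhibiting a Mal'cev term in each case. Groups carry $m(x,y,z) = x\cdot y^{-1}\cdot z$; rings, modules over $R$, associative algebras and Lie algebras over a field all share the underlying abelian group structure and hence the Mal'cev term $m(x,y,z) = x - y + z$; and quasigroups and loops are Mal'cev by the remark following the definition of congruence permutability in the introduction, a suitable ternary term being expressible through the division operations. Each of these varieties is thus congruence permutable, and congruence permutable varieties are in particular congruence modular.

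Next I would treat lattices separately, since they are not Mal'cev: the variety of lattices is congruence distributive (the classical fact, recorded in the excerpt as the statement that congruence modular varieties contain all varieties with lattice operations), and congruence distributive varieties are likewise congruence modular. Having confirmed that every variety in the list satisfies the hypothesis of Theorem~\ref{th:RCM}, I would simply invoke that theorem for the given $\A,\B\in\vV$ to conclude.

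I do not expect any genuine obstacle here: all of the substantive work — in particular the nontrivial forward implication that residual finiteness of $\A\times\B$ forces residual finiteness of each factor, which rests on the product-congruence analysis of Lemma~\ref{le:rs} together with commutator theory and the difference term — has already been carried out in the proof of Theorem~\ref{th:RCM}. The only point meriting any care is to confirm congruence modularity uniformly across a rather heterogeneous list, distinguishing the Mal'cev cases from the congruence distributive case of lattices; this is entirely routine.
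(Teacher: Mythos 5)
Your proposal is correct and coincides with the paper's own derivation: the corollary is stated there as an immediate consequence of Theorem~\ref{th:RCM}, the only content being exactly the routine verification you supply, namely that each listed variety is congruence modular (Mal'cev terms for groups, rings, modules, associative and Lie algebras, quasigroups and loops; congruence distributivity, hence modularity, for lattices). As a side remark, this uniform route---rather than the more elementary embedding-plus-heredity argument mentioned earlier in Section~\ref{secrf}---is what handles quasigroups, which lack neutral elements and idempotents and so need not embed into their direct products.
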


\section{Conclusion}

 Throughout this paper we have seen various situations where finiteness properties are not preserved by 
 direct products and settings where they are by somehow non-trivial reasons.
 We close the paper by giving some questions and suggestions for future research that -- to our mind -- would further enhance our
 understanding of these phenomena.

 In the variety of semigroups the direct product of free semigroups is not finitely presented in general.
 Still certain direct products of infinite, finitely presented semigroups are finitely presented.
This is quite different from our result for loops (Theorem~\ref{th:Ploop}) where no direct product is ever finitely presented except for trivial reasons.
 
\begin{que}
 Is there a congruence modular variety where direct products of free algebras are not finitely presented
 but some product of non-trivial algebras is infinite and finitely presented?
\end{que}

 For loops, and also idempotent magmas (Theorem~\ref{th:Pmagma}),
 it is the non-associativity of operations that seems to be an obstacle for the preservation of finite
 presentability in direct products.

\begin{que}
 Is finite presentability preserved in some varieties of loops that satisfy some generalized associativity
 law, like Bol loops or Moufang loops.
\end{que}

 Theorem~\ref{th:Lfp} shows that it is difficult for an infinite direct product of lattices to be finitely presented
 without ruling it out altogether.

\begin{que}
 Is there an infinite, finitely presented lattice $\L$ and a non-trivial lattice $\M$ (necessarily finite)
 such that $\L\times\M$ is finitely presented?
\end{que}

 Throughout our paper unary algebras provide examples of unexpected behaviour.

\begin{que}
 Classify classes of unary algebras where direct products preserve finite generation, finite presentability,
 or residual finiteness.
\end{que}

 Congruence modular varieties provided a broad context in which direct products were well-behaved with respect
 to residual finiteness and finite presentability (from products to factors).

\begin{que}
 Do our results on finite presentability and residual finiteness also hold for generalizations of 
 congruence modular varieties, e.g. varieties with Taylor term~\cite{HM:SFA}?
\end{que}


 Another possible direction for future research is to consider preservation of finiteness conditions under
 subdirect products, which play a prominent role in the theory of groups~\cite{BM:SFP,Bridson13} and
 more generally Mal'cev algebras.


\begin{thebibliography}{1}



\bibitem{Araujo00}
 I.~M.~Ara{\'u}jo and N.~Ru{\v{s}}kuc.
\newblock On finite presentability of direct products of semigroups,
\newblock {\em Algebra Colloq.}, 7:83--91, 2000.


\bibitem{BM:SFP}
 M.~R. Bridson and C.~F. Miller, III.
\newblock Structure and finiteness properties of subdirect products of groups.
\newblock {\em Proc. Lond. Math. Soc. (3)}, 98(3):631--651, 2009.


\bibitem{Bridson13}
M.~R.~Bridson, J. Howie, C.F. Miller, III, and H. Short.
\newblock On the finite presentation of subdirect products and the nature of residually free groups,
\newblock{\em Amer. J. Math.}, 135:891--933, 2013.

\bibitem{bruck58}
R.~H. Bruck. 
\newblock {\em A survey of binary systems}.
\newblock Springer, Berlin, 1958.

\bibitem{burris81}
S. Burris and H.~P. Sankappanavar. {\em A course in universal algebra}.
GTM 78, Springer, New York, 1981.


\bibitem{corner65}
A.~L.~S.~Corner. Three examples of hopficity in torsion-free abelian groups.
{\em A. L. S., Acta Math. Acad. Sci. Hungar}. 16:303--310, 1965.

\bibitem{Ev:MSD}
T. Evans.
 On multiplicative systems defined by generators and relations. {I}.
  Normal form theorems.
 {\em Proc. Cambridge Philos. Soc.}, 47:637--649, 1951.

\bibitem{Ev:WPAA}
T.~Evans.
\newblock The word problem for abstract algebras.
\newblock {\em J. London Math. Soc.}, 26:64--71, 1951.


\bibitem{FM:CTC}
R.~Freese and R.~N. McKenzie.
\newblock {\em Commutator Theory for Congruence Modular Varieties}, volume 125
  of {\em London Math. Soc. Lecture Note Ser.}
\newblock Cambridge University Press, 1987.

\bibitem{Gr:LTF}
G.~Gr{\"a}tzer.
\newblock {\em Lattice theory: foundation}.
\newblock Birkh\"auser/Springer Basel AG, Basel, 2011.


\bibitem{GHL:SFPL}
G.~Gr{\"a}tzer, A.~P. Huhn, and H.~Lakser.
\newblock On the structure of finitely presented lattices.
\newblock {\em Canad. J. Math.}, 33(2):404--411, 1981.


\bibitem{Gray09}
R.~Gray and N.~Ru\v{s}kuc. On residual finiteness of direct products of algebraic
systems, {\em Monatsh. Math.} 158:63–69, 2009.



\bibitem{HM:SFA}
 D. Hobby and R. McKenzie.
\newblock{\em The structure of finite algebras}, volume 76 of Contemporary
Mathematics.
\newblock American Mathematical Society, 1988.

\bibitem{howie95}
J.~M. Howie.
 {\em Fundamentals of semigroup theory}.
 LMS Monographs 12, The Clarendon Press, New York, 1995. 

\bibitem{hungerford80}
T.~W.~Hungerford. {\em Algebra}. GTM 73, Springer, New York, 1980.

\bibitem{Jo:SFL}
B.~J{\'o}nsson.
\newblock Sublattices of a free lattice.
\newblock {\em Canad. J. Math.}, 13:256--264, 1961.

\bibitem{lothaire97}
M.~Lothaire. {\em Combinatorics on words}. Cambridge University Press, Cambridge, 1997.

\bibitem{lyndon01}
R.~C. Lyndon and P.~E. Schupp. {\em Combinatorial group theory.} Classics in Mathematics, Springer, Berlin, 2001.

\bibitem{mckenzie87}
R.~N. McKenzie, G.~F. McNulty, and W.~F. Taylor. {\em Algebras, Lattices, Varieties, Vol. 1}.
 Wadsworth \&\  Brooks/Cole, Monterey, CA, 1987.




\bibitem{Robertson98}
E.~F. Robertson, N. Ru\v{s}kuc, and J. Wiegold. {\em Generators and relations of direct products of semigroups}.
Trans. Amer. Math. Soc. 350 (1998), 2665--2686.



\end{thebibliography}

\section*{Acknowledgements}

The authors are grateful to Erhard Aichinger, Ralph Freese, Keith Kearnes, Michael Kinyon, \'{A}gnes Szendrei and Petr Vojt\v{e}chovsk\'{y} for helpful conversations on the material in this paper.

\end{document}